\newtheorem{Th}{Theorem}
\newtheorem{Prop}{Proposition}
\newtheorem{Lemma}{Lemma}
\theoremstyle{definition}
\newtheorem{Remark}{Remark}
\newtheorem{Def}{Definition}
\newtheorem{Example}{Example}
\newcommand{\beq}{\begin{equation}}
\newcommand{\eeq}{\end{equation}}
\def\scalar(#1,#2){(#1\mid#2)}
\newcommand{\cb}{{\cal B}}
\newcommand{\xbm}{(X,{\cal B},\mu)}
\newcommand{\ot}{\otimes}
\newcommand{\ov}{\overline}
\newcommand{\bs}{\mathbb{S}}
\newcommand{\Q}{\mathbb{Q}}
\newcommand{\R}{{\mathbb{R}}}
\newcommand{\T}{{\mathbb{T}}}
\newcommand{\C}{{\mathbb{C}}}
\newcommand{\Z}{{\mathbb{Z}}}
\newcommand{\N}{{\mathbb{N}}}
\newcommand{\E}{{\mathbb{E}}}
\newcommand{\vep}{\varepsilon}
\newcommand{\va}{\varphi}
\newcommand{\mob}{\boldsymbol{\mu}}
\newcommand{\bnu}{\boldsymbol{\nu}}
\newcommand{\tend}[3][]{\xrightarrow[#2\to#3]{#1}}
\newcommand{\raz}{\mathbbm{1}}
\newcommand{\stack}[2]{\array{c}{\scriptstyle #1}\\[-1.1ex]{\scriptstyle #2}\endarray}
\newcommand{\ir}{\text{irr}}
\title{Automorphisms with quasi-discrete spectrum, multiplicative functions and average orthogonality along short intervals}
\author{H. El Abdalaoui, M. Lema\'nczyk\thanks{Research supported by Narodowe Centrum Nauki grant UMO-2014/15/B/ST1/03736.}, T. de la Rue}
\begin{document}

\maketitle \normalsize

\thispagestyle{empty}
\begin{abstract}We show that Sarnak's conjecture on M\"obius disjointness holds in every uniquely ergodic model
of a quasi-discrete spectrum automorphism. A consequence of this result is that, for each non constant polynomial $P\in\R[x]$ with irrational leading coefficient and for each multiplicative function $\bnu:\N\to\C$, $|\bnu|\leq1$, we have
\[
    \frac{1}{M} \sum_{M\le m<2M} \frac{1}{H} \left| \sum_{m\le n<m+H} e^{2\pi iP(n)}\bnu(n)  \right|\longrightarrow  0
  \]
  as $M\to\infty$, $H\to\infty$, $H/M\to 0$.
% $$
% \lim_{K\to\infty}\frac1{b_{K+1}}\sum_{k\leq K}\left|\sum_{b_k+1\leq n < b_{k+1}}e^{2\pi i P(n)}\bnu(n)\right|=0
% $$
% for each $(b_k)\subset\N$ with $b_{k+1}-b_k\to\infty$ when $k\to\infty$.
\end{abstract}

\section{Introduction} The central result of this paper is the following.

\begin{Th} \label{tw:sa1} Let $S$ be a uniquely ergodic continuous map defined on a compact metric space $Y$. Let $\nu$ be the unique $S$-invariant (ergodic) Borel probability measure on $Y$. Assume that the measure-theoretic system $(Y,\nu,S)$ has quasi-discrete spectrum. Then, for each multiplicative function $\bnu:\N\to\C$, $|\bnu|\leq 1$, each $f\in C(Y)$ with $\int_Y f\,d\nu=0$ and each $y\in Y$, we have
\beq\label{eq:sa1}
\lim_{N\to\infty}\frac1N\sum_{n\leq N}f(S^ny) \bnu(n)=0.\eeq
\end{Th}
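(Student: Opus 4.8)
The plan is to apply the Kátai–Bourgain–Sarnak–Ziegler (KBSZ) orthogonality criterion, which converts the estimate \eqref{eq:sa1}, required for \emph{all} multiplicative $\bnu$, into a single purely dynamical self-correlation estimate in which $\bnu$ no longer appears. Fix $y\in Y$ and set $a_n:=f(S^ny)$, a sequence bounded by $\|f\|_\infty$. By the KBSZ criterion it suffices to prove that, for every pair of distinct primes $p\neq q$,
\beq
\frac1N\sum_{n\le N} f(S^{pn}y)\,\overline{f(S^{qn}y)} \longrightarrow 0 \quad (N\to\infty).
\eeq
The gain is that the multiplicative structure has been entirely removed: what remains is an ordinary ergodic average of the continuous function $F:=f\otimes\overline f$ on $Y\times Y$ along the orbit of the diagonal point $z:=(y,y)$ under the transformation $T:=S^p\times S^q$.

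Next I would evaluate this correlation using the quasi-discrete spectrum hypothesis. By Abramov's structure theorem, $(Y,\nu,S)$ is, measure-theoretically, a unipotent affine system built from a tower of quasi-eigenfunctions, and $L^2(\nu)$ is spanned by them; along an orbit, a quasi-eigenfunction of order $k$ produces a phase $e^{2\pi i R(n)}$ with $R$ a real polynomial of degree $k$ whose leading coefficient is an irrational multiple of the governing eigenvalue. Applying this to $S^p$ and $S^q$ separately and expanding $f$ over quasi-eigenfunctions, the correland $f(S^{pn}y)\overline{f(S^{qn}y)}$ becomes a combination of phases $e^{2\pi i(R_p(n)-R_q(n))}$; since $p\neq q$ forces $p^k\neq q^k$, the leading coefficient of $R_p-R_q$ is a nonzero irrational, so Weyl's equidistribution theorem makes the Cesàro averages vanish. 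The hypothesis $\int_Y f\,d\nu=0$ is exactly what kills the order-$0$ (constant) contribution, the only term that would otherwise survive; this is the heuristic reason \eqref{eq:sa1} should hold.

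The essential difficulty is that the preceding spectral picture is purely measure-theoretic: in an \emph{arbitrary} uniquely ergodic model the quasi-eigenfunctions need not admit continuous versions, so they cannot be evaluated at the given point $y$ nor inserted into the Cesàro averages pointwise. This is precisely where unique ergodicity must be used, and I expect it to be the main obstacle. I would argue by induction on the order of the quasi-discrete spectrum. For the base case, genuine (order-$1$) eigenfunctions of a uniquely ergodic system \emph{do} possess continuous versions, so the maximal equicontinuous factor is a topological group rotation with continuous eigenfunctions and the computation above applies verbatim (recovering \eqref{eq:sa1} for irrational rotations). For the inductive step, I would realize $(Y,S)$ as a topological affine/skew-product extension raising the order by one and verify that this extension remains uniquely ergodic; unique ergodicity then upgrades the pointwise averages of the continuous function $F$ along the $T$-orbit of $z$ to convergence that is \emph{uniform in} $y$ toward a single limit, which can be identified with the spectrally computed space average and shown to be $0$. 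The crux is thus transferring the measure-theoretic quasi-discrete structure, level by level, through the topological model — obtaining uniform convergence of the correlations to the mean-zero space average \emph{without} continuous quasi-eigenfunctions available.
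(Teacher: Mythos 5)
Your first step — the KBSZ reduction to showing $\frac1N\sum_{n\le N} f(S^{pn}y)\overline{f(S^{qn}y)}\to 0$ for distinct primes $p\neq q$ — coincides with the paper's. You also correctly identify the essential difficulty: in an arbitrary uniquely ergodic model the quasi-eigenfunctions need not be continuous. But your proposed resolution of that difficulty does not work, and this is a genuine gap. Your base case asserts that order-$1$ eigenfunctions of a uniquely ergodic system always admit continuous versions; this is false, and the paper's introduction points out exactly why: every ergodic transformation (in particular an irrational rotation) admits a uniquely ergodic model which is topologically \emph{mixing} (Lehrer), and a topologically mixing system has no nonconstant continuous eigenfunctions. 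Likewise, your inductive step asks to ``realize $(Y,S)$ as a topological affine/skew-product extension,'' but the model $(Y,S)$ is given and arbitrary; the Abramov/Hahn--Parry affine structure is only a measure-theoretic isomorphism and cannot be imposed on the topology of $Y$. So the level-by-level transfer of structure through the topological model, which you yourself flag as the crux, is precisely the step that cannot be carried out.

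The paper circumvents this entirely by never evaluating quasi-eigenfunctions pointwise. The correlation $\frac1{N}\sum_{n\le N}\delta_{(S^{pn}y,\,S^{qn}y)}$ has, along any subsequence, a weak limit $\rho$ which — by unique ergodicity of $S^p$ and $S^q$ (here total ergodicity enters) — is automatically a \emph{joining} of $S^p$ and $S^q$; continuity of $f$ alone then gives $\frac1{N_k}\sum f(S^{pn}y)\overline{f(S^{qn}y)}\to\int f\otimes\overline f\,d\rho$. The quantity $\int f\otimes\overline f\,d\rho$ is then controlled by a purely measure-theoretic property of the abstract system (the AOP property): for any ergodic joining $\kappa$ of $T^r$ and $T^s$ with $r,s$ large coprime primes, $\int f\otimes\overline g\,d\kappa$ vanishes. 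This is proved from the algebra of quasi-eigenfunctions in the measure-theoretic model, via the identity $W_{T^r}^k f=(W_T^kf)^{r^k}$ and the key lemma that $c_1^r=c_2^s$ has at most one coprime solution $(r,s)$ for irrational $c_1,c_2$ — which is the rigorous version of your ``leading coefficient of $R_p-R_q$ is a nonzero irrational'' heuristic. Since AOP is an isomorphism invariant, it transfers to the arbitrary model with no continuity requirements. If you want to salvage your write-up, replace the induction on topological extensions by this joining argument; the spectral computation you sketch is essentially correct but must be performed on ergodic joinings in the measure-theoretic system, not along orbits in the topological one.
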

If, moreover, we know that $\lim_{N\to\infty}\frac1N\sum_{n\leq N} \bnu(n)=0$, then~\eqref{eq:sa1} holds regardless of the value of $\int_Y f\,d\nu$. In particular, by taking $\bnu=\mob$, the classical M\"obius function: $\mob(1)=1$, $\mob(p_1\ldots p_k)=(-1)^k$ for different prime numbers $p_1,\ldots,p_k$ and $\mob(n)=0$ for other values of $n\in\N$, we obtain the validity of  Sarnak's conjecture \cite{Sa} in the class of systems under consideration. Recall that Sarnak's conjecture states that, if $T$ is a continuous map of a compact metric space $X$ with zero topological entropy,  then $\frac1N\sum_{n\leq N}f(T^nx)\mob(n)\to 0$ for each $f\in C(X)$ and $x\in X$.
Sarnak's conjecture has been proved to hold in several cases~\cite{Ab-Ku-Le-Ru,Ab-Le-Ru,Ab-Ka-Le,Bo0,Bo,Bo-Sa-Zi,Do-Ka,Fe-Ma,Gr,Gr-Ta,Kar,Li-Sa,Ma-Ri1,Ma-Ri2,Pe,Sa,Ve}.
Automorphisms with quasi-discrete spectrum have zero measure-theoretic entropy, hence the topological entropy will be equal to zero in each uniquely ergodic model of them.

% ~\footnote{One of motivations to study Sarnak's conjecture in the uniquely ergodic models has been noticed in \cite{Do-Se} and comes from the following result by B.\ Weiss \cite{We}: if a sequence $(a_n)$ is generic for an
% ergodic measure then it is the limit in the Besicovitch
% pseudo-distance
% $$D((x_n), (y_n)) := \limsup_{N\to\infty}\frac1N\sum_{n\leq N}|x_n-y_n|$$
% of a sequence of strictly ergodic points, say $D((a_n),(b_n))<\vep$ with $(b_n)$ strictly ergodic. We have,
% $$
% \left|\frac1N\sum_{n\leq N}a_n\ov{b}_n\right|=\left|\frac1N\sum_{n\leq N}|a_n|^2+\frac1N\sum_{n\leq N}a_n(\ov{b_n-a_n})\right|\geq$$
% $$
% \frac1N\sum_{n\leq N}|a_n|^2-\frac1N\sum_{n\leq N}A|b_n-a_n|,$$
% where $A=\sup|a_n|$, and hence $(a_n)$ and $(b_n)$ cannot be uncorrelated.
% 
% Now, the Chowla conjecture (in its equivalent form, see e.g.\ \cite{Ab-Ku-Le-Ru}) states that $\mob$ is generic for the relatively independent extension of so called Mirsky measure, in particular, this relatively independent extension is ergodic. Hence, if Sarnak's conjecture holds for ALL uniquely ergodic systems then Chowla conjecture fails.}
%See also \cite{Ve} for some consequences of the validity of Sarnak's conjecture on the behavior of the M\"obius function.

So far, the only automorphisms for which Sarnak's conjecture was known to hold
in each of their uniquely ergodic model were those whose prime powers are disjoint in the sense of Furstenberg \cite{Fu}. This assertion is already present in \cite{Bo-Sa-Zi}, see also \cite{Ab-Le-Ru}. However, it is easy to see that even irrational rotations do not enjoy this strong disjointness property (indeed, powers have ``large'' common factors). But Theorem~\ref{tw:sa1} is known to hold for them \cite{Bo-Sa-Zi}. On the other hand, irrational rotations have many other uniquely ergodic models. For those in which eigenfunctions are continuous, e.g.\ symbolic models like Sturmian models \cite{Fo},  Sarnak's conjecture has also been proved to hold in view of a general criterion on lifting Sarnak's conjecture by continuous uniquely ergodic extensions, \cite{Ab-Ka-Le},\cite{Do-Ka}, \cite{Ve}. However, there are topological models of irrational rotations in which  eigenfunctions are not continuous. Indeed, each ergodic transformation admits even a uniquely ergodic model which is topologically mixing \cite{Leh}
.

An important tool for the proof of Theorem~\ref{tw:sa1} is provided by a general criterion for an ergodic automorphism $T$ to satisfy Sarnak's conjecture in each of its uniquely ergodic model.  The criterion is, roughly, to establish an approximative disjointness of sufficiently large relatively prime powers.

\begin{Def}
  We say that the measure-theoretic dynamical system $(X,\mu,T)$ has \emph{Asymptotical Orthogonal Powers} (AOP) if, for any $f$ and $g$ in $L^2(\mu)$ with $\int_X f\, d\mu = \int_X g\, d\mu =0$, we have
\beq\label{math}
\lim_{\stack{r,s\to\infty,}{r,s\text{ different primes}}}  \sup_{\stack{\kappa\text{ ergodic joining}}{\text{of }T^r\text{ and }T^s}}
\left|\int_{X\times X} f\otimes g\, d\kappa \right| = 0.
\eeq
\end{Def}

We would like to emphasize that this AOP property can even be satisfied for an automorphism $T$ for which all non-zero powers are measure-theoretically isomorphic (Example~\ref{ex:one}). Further remarks on the AOP property are discussed in Section~\ref{sec:aop}: we show that it implies zero entropy, but give also examples of zero-entropy automorphisms which do not have AOP.

\begin{Th}\label{tw:sa2}
Assume that $T$ is a totally ergodic automorphism of a standard Borel probability space $\xbm$, and that it has AOP. 
Let $S$ be a uniquely ergodic continuous map of a compact metric space $Y$, and denote by $\nu$ the unique $S$-invariant probability measure. 
Assume that the measure-theoretic systems $(X,\mu,T)$ and $(Y,\nu,S)$ are isomorphic. Then, for each multiplicative function $\bnu:\N\to\C$, $|\bnu|\leq 1$, each $f\in C(Y)$ with $\int_Y f\,d\nu=0$ and each $y\in Y$, we have
\[
\lim_{N\to\infty}\frac1N\sum_{n\leq N}f(S^ny) \bnu(n)=0.
\]
In particular, Sarnak's conjecture holds for $S$.
\end{Th}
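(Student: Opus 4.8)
The plan is to apply the Kátai–Bourgain–Sarnak–Ziegler (KBSZ) criterion, which reduces a Sarnak-type orthogonality statement for a bounded sequence to the decorrelation of that sequence along distinct prime dilations. Fix $f\in C(Y)$ with $\int_Y f\,d\nu=0$ and fix $y\in Y$; after rescaling we may assume $\|f\|_\infty\le 1$, so that $a(n):=f(S^ny)$ satisfies $|a(n)|\le 1$. By the KBSZ criterion it suffices to prove that
\[
c(p,q):=\limsup_{N\to\infty}\left|\frac1N\sum_{n\le N} a(pn)\,\overline{a(qn)}\right|\longrightarrow 0
\]
as $p,q\to\infty$ through distinct primes; the criterion then yields $\frac1N\sum_{n\le N} f(S^ny)\bnu(n)\to 0$ for every multiplicative $\bnu$ with $|\bnu|\le 1$, which is the assertion.

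To estimate $c(p,q)$, set $F:=f\otimes\bar f\in C(Y\times Y)$ and $R:=S^p\times S^q$, so that $\frac1N\sum_{n\le N}a(pn)\overline{a(qn)}=\frac1N\sum_{n\le N}F(R^n(y,y))=\int_{Y\times Y}F\,d\mu_N$, where $\mu_N:=\frac1N\sum_{n\le N}\delta_{R^n(y,y)}$. By weak-$*$ compactness of probability measures on the compact space $Y\times Y$, along a subsequence realising the $\limsup$ we may extract a further subsequence with $\mu_{N_k}\to\kappa$; since $F$ is continuous, $c(p,q)=|\int F\,d\kappa|$ for such a limit $\kappa$, which is $R$-invariant. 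The crucial structural input is that the powers $S^p$ are \emph{uniquely ergodic}: since $(Y,\nu,S)$ is totally ergodic (being isomorphic to the totally ergodic $(X,\mu,T)$), $S^p$ is $\nu$-ergodic, and for any $S^p$-invariant $\rho$ the average $\frac1p\sum_{i=0}^{p-1}(S^i)_*\rho$ is $S$-invariant, hence equals $\nu$; as $\nu$ is $S^p$-ergodic and a convex combination of the $S^p$-ergodic measures $(S^i)_*\rho$, extremality forces $\rho=\nu$. Thus every $S^p$-invariant measure is $\nu$, so both marginals of each $\mu_N$ converge to $\nu$, and every limit $\kappa$ is a joining of $(Y,\nu,S^p)$ and $(Y,\nu,S^q)$.

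The main obstacle is that this limiting joining $\kappa$ need \emph{not} be ergodic, whereas the AOP property only controls ergodic joinings. This is precisely where unique ergodicity of the powers pays off a second time: decomposing $\kappa=\int\kappa_\omega\,dP(\omega)$ into its $R$-ergodic components, each marginal of $\kappa_\omega$ is an $S^p$- (resp.\ $S^q$-) invariant measure, hence equals $\nu$ by the above. Consequently every $\kappa_\omega$ is an \emph{ergodic} joining of $S^p$ and $S^q$, and therefore
\[
c(p,q)=\left|\int F\,d\kappa\right|\le\int\left|\int F\,d\kappa_\omega\right|dP(\omega)\le\sup_{\kappa'\ \text{erg.\ joining of }S^p,\,S^q}\left|\int_{Y\times Y} f\otimes\bar f\,d\kappa'\right|.
\]
Since $(Y,\nu,S)$ is isomorphic to $(X,\mu,T)$ it inherits the AOP property, and applying AOP to the mean-zero functions $f$ and $\bar f$ shows the right-hand side tends to $0$ as $p,q\to\infty$ through distinct primes. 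Hence $c(p,q)\to 0$, and the KBSZ criterion completes the proof; taking $\bnu=\mob$ yields Sarnak's conjecture for $S$.
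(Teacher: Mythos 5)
Your proof is correct and follows essentially the same route as the paper's: reduce to the KBSZ criterion, use unique ergodicity of $S^p$ and $S^q$ to identify weak-$*$ limits of the empirical measures along $(S^{pn}y,S^{qn}y)$ as joinings, pass to the ergodic decomposition, and invoke AOP (transported by isomorphism) on the ergodic components. You are merely more explicit than the paper about why the powers are uniquely ergodic and why each ergodic component is itself a joining; the only nitpick is that the measures $(S^i)_*\rho$ in your averaging argument need not be ergodic, but extremality of $\nu$ among $S^p$-invariant measures already gives the conclusion.
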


Using the fact that, in uniquely ergodic models, the average behavior of points is uniform, we will prove that uniquely ergodic models of automorphisms with AOP enjoy a stronger form of Sarnak's conjecture, in which the sequence output by the system is allowed to switch orbit from time to time.

\begin{Th}\label{tw:sa3} 
Let $(X,T)$ be uniquely ergodic, and let $\mu$ be the unique $T$-invariant probability measure. Assume that, for any uniquely ergodic model 
$(Y,S)$ of $(X,\mu,T)$,  the conclusion of Theorem~\ref{tw:sa2} holds.
Let $1=b_1<b_2<\cdots$ be an increasing sequence of integers with $b_{k+1}-b_k\to\infty$. 
Let $(x_k)$ be an arbitrary sequence of points in $X$. Then,
for each multiplicative function $\bnu:\N\to\C$, $|\bnu|\leq1$ and for each $f\in C(X)$ with $\int_X f\,d\mu=0$, 
we have
\beq\label{rou1}
\lim_{K\to\infty}\frac{1}{b_{K+1}} \sum_{1\le k\le K} \sum_{b_{k}\le n <b_{k+1}}f(T^nx_k)\bnu(n)=0.
\eeq
\end{Th}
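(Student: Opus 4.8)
The plan is to encode the orbit-switching data $(b_k)$ and $(x_k)$ into a single point of an auxiliary subshift, so that the double sum in \eqref{rou1} becomes an ordinary twisted Birkhoff average of \emph{one} orbit along the initial segment $[1,b_{K+1})$; the conclusion of Theorem~\ref{tw:sa2}, applied in a suitable model, then finishes the proof. Concretely, I would work in $X^{\N}$ with the shift $\sigma$ and define $\omega\in X^{\N}$ by $\omega(n)=T^n x_{k(n)}$, where $k(n)$ is the unique index with $b_{k(n)}\le n<b_{k(n)+1}$ (and $\omega(0):=x_1$). Then $\omega(n+1)=T\omega(n)$ for every $n$ that is not a block endpoint $b_{k+1}-1$, so the ``splices'' occur exactly on $\{b_{k+1}-1:k\ge1\}$, which has density zero in $\N$ since $b_{k+1}-b_k\to\infty$. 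Let $Y:=\overline{\{\sigma^n\omega:n\ge0\}}$, let $\pi:Y\to X$ be the continuous evaluation $\pi(y)=y(0)$, and put $F:=f\circ\pi\in C(Y)$. Because $\omega(n)=T^nx_{k(n)}$ we have $F(\sigma^n\omega)=f(T^nx_{k(n)})$, whence, using $b_1=1$,
\[
  \frac{1}{b_{K+1}}\sum_{1\le n<b_{K+1}}F(\sigma^n\omega)\,\bnu(n)
  =\frac{1}{b_{K+1}}\sum_{1\le k\le K}\ \sum_{b_k\le n<b_{k+1}}f(T^nx_k)\,\bnu(n),
\]
so \eqref{rou1} is, up to the harmless replacement of $b_{K+1}^{-1}$ by $(b_{K+1}-1)^{-1}$, the value at $N=b_{K+1}-1$ of the twisted average $\frac1N\sum_{n\le N}F(\sigma^n\omega)\bnu(n)$.

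It then remains to check that $(Y,\sigma)$ is a uniquely ergodic model of $(X,\mu,T)$ and to invoke the hypothesis. Off the density-zero splice set one has $\pi\circ\sigma=T\circ\pi$, so for any $\sigma$-invariant $\nu$ on $Y$, $\nu(\{y:y(1)\ne Ty(0)\})=0$; by shift-invariance this forces $\nu$-a.e.\ $y$ to equal $(T^n y(0))_{n\ge0}$, so $\pi$ is invertible $\nu$-a.e.\ and $(Y,\nu,\sigma)$ is measure-theoretically isomorphic to $(X,\pi_*\nu,T)$. Once $Y$ is known to be uniquely ergodic, $\pi_*\nu$ is $T$-invariant and hence equals $\mu$; thus $(Y,\sigma)$ is a uniquely ergodic model of $(X,\mu,T)$, $F$ is continuous, and $\int_Y F\,d\nu=\int_X f\,d\mu=0$. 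The hypothesis then gives $\frac1N\sum_{n\le N}F(\sigma^n\omega)\bnu(n)\to0$, and evaluating along $N=b_{K+1}-1$ yields \eqref{rou1}.

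The main obstacle is therefore the unique ergodicity of $Y$. To establish it I would use that unique ergodicity of $(X,T)$ gives a Weyl-type uniform ergodic theorem: for each $g\in C(X)$, $\frac1L\sum_{m=a}^{a+L-1}g(T^m x)\to\int_X g\,d\mu$ uniformly in the starting point $x$ and the starting time $a$ (uniformity in $a$ is automatic, as $T^a x$ is again a point of $X$). For $G\in C(Y)$ depending on finitely many coordinates, set $\tilde g(x):=G\big((T^i x)_{i\ge0}\big)\in C(X)$; since each block $[b_k,b_{k+1})$ is a genuine $T$-orbit segment of length $L_k=b_{k+1}-b_k\to\infty$, the uniform theorem forces all block averages of $G$ along $\omega$ to converge to $\int_X\tilde g\,d\mu$, and the density-zero splices are negligible, so the empirical averages of $\omega$ converge. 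To upgrade this to \emph{every} $y\in Y$, i.e.\ to every limit $y=\lim_i\sigma^{n_i}\omega$, the assumption $b_{k+1}-b_k\to\infty$ is decisive: consecutive splices of $\omega$ are eventually farther apart than any fixed window, so any such $y$ contains only infinitely-separated (in particular density-zero) splices and is otherwise a true forward $T$-orbit; the same uniform theorem applied on its long good stretches then gives convergence of its empirical averages to $\int_X\tilde g\,d\mu$, whence unique ergodicity (finitely-supported $G$ being dense in $C(Y)$). Making the step ``limit points carry only infinitely-separated splices'' precise, and verifying that the exceptional behaviour near splices is uniformly negligible, is the sole genuinely technical point; the rest is bookkeeping.
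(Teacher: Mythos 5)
Your proposal is correct and follows essentially the same route as the paper: the authors likewise splice the orbit segments into a single point $\underline{y}\in X^{\N}$, show its shift-orbit closure is a uniquely ergodic model of $(X,\mu,T)$ (using exactly the two ingredients you identify — that the block structure forces any invariant measure to project to $\mu$ and to be the graph measure, and that every limit point of the orbit has at most one splice), and then apply the conclusion of Theorem~\ref{tw:sa2} to the continuous function of the first coordinate. The only technical point you defer (the structure of limit points $\lim_j\sigma^{n_j}\omega$) is handled in the paper by noting that each window $\underline{y}[n_j+1,n_j+M]$ contains at most one discontinuity once $n_j$ is large, so every point of $Y$ is either a genuine $T$-orbit or such an orbit with a single splice.
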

Again, if we take $\bnu=\mob$, the conclusion holds even if $\int_X f\,d\mu\neq 0$. 
%We then  say that the \emph{Möbius orthogonality with moving orbits (Momo)} property holds for $(X,T)$.

One of the main results of the paper, Theorem~\ref{thm:AOPQS}, says that all quasi-discrete spectrum automorphisms have AOP. Quasi-discrete spectrum automorphisms (see Section~\ref{sec:qds} for a precise definition) have been first studied by Abramov~\cite{Ab}.
They admit special uniquely ergodic models which are of  the form $Tx=Ax+b$, where $X$ is a compact, Abelian, connected group, $A$ is a continuous group automorphism and $b\in X$ (with some additional assumptions on $A$ and $b$). It has already been proved by Liu and Sarnak in \cite{Li-Sa} that, in these algebraic models, Sarnak's conjecture holds. 
As a consequence of Theorem~\ref{tw:sa2}, we obtain that it holds in \emph{every} uniquely ergodic model of any of these systems (in fact, Theorem~\ref{tw:sa2} directly gives Theorem~\ref{tw:sa1}). 

In Section~\ref{sec:proof-poly}, we apply Theorem~\ref{tw:sa3} in a special situation of such algebraic models which are built by taking affine cocycle extensions of irrational rotations (see \cite{Fu}). This leads to the following theorem.

% \begin{Cor}\label{wn:rou1} Assume that $\bnu:\N\to\C$, $|\bnu|\leq1$, is multiplicative. For $Tx=Ax+b$ being a uniquely ergodic model of a quasi-discrete spectrum automorphism, we have
% $$
% \frac1{b_{K+1}}\sum_{k\leq K}\left|\sum_{b_k+1\leq n < b_{k+1}}\chi\left(\sum_{i=0}^{n-1}A^ib\right)\bnu(n)\right|\to 0\text{ when }K\to\infty,$$
% for each $(b_k)\subset\N$ with $b_{k+1}-b_k\to\infty$.\end{Cor}
% 
% Finally, by considering a special situation of such algebraic models which are built by taking affine cocycle extensions of irrational rotations (see \cite{Fu}), we obtain the following.

\begin{Th}\label{wn:hmt1}  Assume that $\bnu:\N\to\C$, $|\bnu|\leq1$, is multiplicative. Then, for  each increasing sequence $1=b_1<b_2<\cdots$ with $b_{k+1}-b_k\to\infty$ and for each non constant polynomial $P\in\R[x]$ with irrational leading coefficient, we have
$$
\frac1{b_{K+1}}\sum_{k\leq K}\left|\sum_{b_k\leq n < b_{k+1}}e^{2\pi iP(n)}\bnu(n)\right|\longrightarrow0\text{ when }K\to\infty.$$
\end{Th}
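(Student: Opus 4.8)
The plan is to realize the sequence $e^{2\pi iP(n)}$ as an orbit of a continuous character under a uniquely ergodic affine map with quasi-discrete spectrum, and then to apply Theorem~\ref{tw:sa3}, using the freedom in the choice of the base points to absorb the moduli appearing in the statement.

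Write $d=\deg P\geq 1$ and let $\alpha_d$ denote the irrational leading coefficient of $P$. First I would set $\alpha=d!\,\alpha_d$ (which is again irrational) and consider the Furstenberg affine map $T$ on the torus $\T^d$ given by
\[
T(x_1,\dots,x_d)=(x_1+\alpha,\,x_2+x_1,\,\dots,\,x_d+x_{d-1}).
\]
By a classical theorem of Furstenberg \cite{Fu}, $T$ is uniquely ergodic, its unique invariant measure being Haar measure $\mu$ on $\T^d$, and by Abramov's theory \cite{Ab} it has quasi-discrete spectrum. A direct computation shows that the last coordinate of $T^n(x_1,\dots,x_d)$ is a polynomial in $n$ of degree $d$ whose leading coefficient equals $\alpha/d!=\alpha_d$ and whose lower-order coefficients depend, through an invertible triangular linear system, on $x_1,\dots,x_d$. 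Hence one can choose a base point $x^{(0)}=(x_1^{(0)},\dots,x_d^{(0)})$ so that this last coordinate is exactly $P(n)$. Taking the character $\chi(x_1,\dots,x_d)=e^{2\pi ix_d}$, which is continuous and satisfies $\int_{\T^d}\chi\,d\mu=0$ since it is nontrivial, we obtain $\chi(T^nx^{(0)})=e^{2\pi iP(n)}$ for all $n$.

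Since $(\T^d,\mu,T)$ is totally ergodic (as $\alpha$ is irrational) and has quasi-discrete spectrum, Theorem~\ref{thm:AOPQS} gives that it has AOP, so by Theorem~\ref{tw:sa2} the conclusion of that theorem holds in every uniquely ergodic model of it; thus $(\T^d,T)$ satisfies the hypothesis of Theorem~\ref{tw:sa3}. The remaining, and genuinely delicate, point is that Theorem~\ref{tw:sa3} controls the block sums \emph{without} absolute values, whereas the statement to be proved involves $|\cdot|$ around each block. Here I would exploit the affine structure: translating the base point in its last coordinate by a constant $c$ leaves all other coordinates of the orbit unchanged and adds $c$ to the last one, so it simply multiplies $\chi(T^nx^{(0)})$ by the constant $e^{2\pi ic}$ uniformly in $n$. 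Consequently, for each block $[b_k,b_{k+1})$ I would let $\theta_k$ be the argument of $\sum_{b_k\le n<b_{k+1}}e^{2\pi iP(n)}\bnu(n)$ and choose the point $x_k$ to be $x^{(0)}$ shifted in its last coordinate by $c_k=-\theta_k/(2\pi)$. With this choice $\chi(T^nx_k)=e^{-i\theta_k}e^{2\pi iP(n)}$, so that
\[
\sum_{b_k\le n<b_{k+1}}\chi(T^nx_k)\,\bnu(n)=\left|\sum_{b_k\le n<b_{k+1}}e^{2\pi iP(n)}\bnu(n)\right|.
\]

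Applying Theorem~\ref{tw:sa3} to $f=\chi$ and to the (arbitrary) sequence of base points $(x_k)$ then yields precisely that $\frac1{b_{K+1}}\sum_{k\le K}\bigl|\sum_{b_k\le n<b_{k+1}}e^{2\pi iP(n)}\bnu(n)\bigr|\to0$, which is the assertion. I expect the main obstacle to be exactly the passage from the signed block sums of Theorem~\ref{tw:sa3} to the sums of moduli here; the phase-absorption trick above resolves it, but it relies essentially on the affine (group) structure of the model and on the fact that an arbitrary constant phase can be produced by a translation of the base point. The verifications that $T$ is uniquely ergodic with quasi-discrete spectrum and that the coefficient-matching system is solvable are routine.
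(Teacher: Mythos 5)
Your proposal is correct and follows essentially the same route as the paper: the same Furstenberg affine map on $\T^d$, the same character $e^{2\pi i x_d}$ with base point chosen so that the last coordinate of the orbit equals $P(n)$, and the same phase-absorption trick of shifting the last coordinate of the $k$-th base point to turn each block sum into its modulus before invoking Theorem~\ref{tw:sa3}.
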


It has been noted to us by N.~Frantzikinakis that the particular case $P(n)=\alpha n$, which we present in Section~\ref{sec:alpha_n},  follows from a recent result of Matom\"aki, Radziwi\l\l\ and Tao \cite{Ma-Ra-Ta}. We can also reformulate Theorem~\ref{wn:hmt1} in the following way.

\begin{Th}
  \label{thm:HX}
  Assume that $\bnu:\N\to\C$, $|\bnu|\leq1$, is multiplicative. For each non constant polynomial $P\in\R[x]$ with irrational leading coefficient, we have
  \[
    \frac{1}{M} \sum_{M\le m<2M} \frac{1}{H} \left| \sum_{m\le n<m+H} e^{2\pi iP(n)}\bnu(n)  \right|\longrightarrow  0
  \]
  as $M\to\infty$, $H\to\infty$, $H/M\to 0$.
\end{Th}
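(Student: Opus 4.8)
The plan is to deduce Theorem~\ref{thm:HX} from Theorem~\ref{wn:hmt1} by a routine reformulation, showing that the two averaging schemes are essentially equivalent. The key observation is that the block-averaging in Theorem~\ref{wn:hmt1}, taken over an arbitrary increasing sequence $(b_k)$ with $b_{k+1}-b_k\to\infty$, is flexible enough to encode the short-interval averages $\frac1M\sum_{M\le m<2M}\frac1H|\cdots|$ appearing in Theorem~\ref{thm:HX}. Concretely, I would fix a regime $M\to\infty$, $H\to\infty$ with $H/M\to0$, and argue by contradiction: if the conclusion of Theorem~\ref{thm:HX} fails, then there is $\eps>0$ and sequences $M_j\to\infty$, $H_j\to\infty$ with $H_j/M_j\to0$ along which the short-interval average stays $\ge\eps$.

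The main step is to construct, from such a bad sequence, a single increasing sequence $(b_k)$ with $b_{k+1}-b_k\to\infty$ whose block-structure, when plugged into Theorem~\ref{wn:hmt1}, reproduces the short-interval averages and therefore fails to tend to $0$, contradicting Theorem~\ref{wn:hmt1}. The natural device is to tile each dyadic window $[M_j,2M_j)$ by consecutive blocks of common length $H_j$, i.e.\ to set $b_k=M_j+(k-k_j)H_j$ for the relevant range of $k$, so that the inner sums $\sum_{b_k\le n<b_{k+1}}e^{2\pi iP(n)}\bnu(n)$ are exactly the short sums $\sum_{m\le n<m+H_j}$. One must then concatenate these tilings across $j$ (interpolating over the gaps $[2M_j,M_{j+1})$ by blocks whose lengths also tend to infinity) into one global sequence starting at $b_1=1$, and check that $b_{K+1}-b_K\to\infty$ holds. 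Since within the $j$-th window the normalizing factor $b_{K+1}$ is comparable to $M_j$ (as $H/M\to0$ forces the number of blocks in $[M_j,2M_j)$ to be of order $M_j/H_j$, and the partial normalization $\frac1{b_{K+1}}\sum_{k\le K}$ restricted to that window is comparable to $\frac1{M_j}\sum_{M_j\le m<2M_j}\frac1{H_j}$ up to boundary terms of order $H_j$), the persistence of the lower bound $\eps$ along the windows survives into the $(b_k)$-average, yielding the desired contradiction.

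The hard part, and the only genuinely delicate point, is bookkeeping the normalization: Theorem~\ref{wn:hmt1} normalizes by the single quantity $b_{K+1}$ (the right endpoint of the last block), whereas Theorem~\ref{thm:HX} normalizes each window separately by $M$. To reconcile these I would choose the growth of $M_j$ to be lacunary, $M_{j+1}\gg M_j$, so that each dyadic window $[M_j,2M_j)$ dominates the total mass accumulated before it; then along the subsequence $K=K_{j+1}$ of indices marking the end of the $j$-th window, the average $\frac1{b_{K+1}}\sum_{k\le K}$ is dominated by the contribution of the $j$-th window alone and is bounded below by a fixed multiple of $\eps$. A minor technical nuisance is that $H_j$ need not divide $M_j$, leaving an incomplete final block in each window; but this contributes a boundary error of relative size $O(H_j/M_j)\to0$, hence is negligible. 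Once the normalization is controlled, the contradiction with the convergence asserted in Theorem~\ref{wn:hmt1} is immediate, and the theorem follows.
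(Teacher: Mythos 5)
Your overall strategy --- argue by contradiction, tile each bad window $[M_j,2M_j)$ by blocks of length $H_j$, concatenate the tilings into a single sequence $(b_k)$ with $b_{k+1}-b_k\to\infty$, and invoke Theorem~\ref{wn:hmt1} --- is the same as the paper's. But there is a genuine gap at the step where you claim that the block average restricted to the $j$-th window is ``comparable to $\frac1{M_j}\sum_{M_j\le m<2M_j}\frac1{H_j}|\cdots|$ up to boundary terms of order $H_j$''. Your tiling $b_k=M_j+(k-k_j)H_j$ produces the short sums $\sum_{m\le n<m+H_j}$ only for the roughly $M_j/H_j$ values of $m$ lying in the \emph{single} residue class $m\equiv M_j \pmod{H_j}$, whereas the hypothesis you are contradicting controls the average of these short sums over \emph{all} $m\in[M_j,2M_j)$. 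The average over one residue class mod $H_j$ can be far smaller than the average over all $m$ (there is no useful continuity of $m\mapsto\bigl|\sum_{m\le n<m+H_j}\bigr|$ on the scale $H_j$: shifting $m$ by one already changes the sum by up to $2$, so over a shift of $H_j$ the sum can change completely). Hence the lower bound $\varepsilon$ need not survive the restriction to your particular tiling, and no contradiction is obtained.

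The missing idea is a pigeonhole over the $H_j$ possible offsets of the tiling: rewriting
\[
\frac{1}{M_j}\sum_{M_j\le m<2M_j}\frac{1}{H_j}\left|\sum_{m\le n<m+H_j}e^{2\pi iP(n)}\bnu(n)\right|
=\frac{1}{H_j}\sum_{0\le r<H_j}\ \frac{1}{M_j/H_j}\sum_{\substack{M_j\le m<2M_j\\ m\equiv r \bmod H_j}}\frac{1}{H_j}\left|\sum_{m\le n<m+H_j}e^{2\pi iP(n)}\bnu(n)\right|,
\]
one finds for each $j$ some offset $r_j$ whose inner average still exceeds $\varepsilon$, and it is the tiling with that offset (not the one anchored at $M_j$) that must be fed into Theorem~\ref{wn:hmt1}. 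This is exactly the step the paper takes. The rest of your bookkeeping is sound: since all the summands are nonnegative, the earlier windows can only increase the sum and one only needs $b_{K+1}\sim 2M_j$ at the end of the $j$-th window, so the lacunarity $M_{j+1}\gg M_j$ you impose is more than is actually required (the paper only asks $M_{j+1}>2M_j+H_j$), and the incomplete final block is indeed a negligible boundary effect.
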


%In the last section, using \cite{Ma-Ra-Ta}, we will discuss the validity of Sarnak's conjecture for uniquely ergodic models of finite rotations.

\section{Main tools}

\subsection{Joinings}\label{secjoi} Let $\xbm$ be a standard Borel probability space. We denote by $C_2(X,\mu)$ the corresponding space of {\em couplings}: $\rho\in C_2(X,\mu)$ if $\rho$ is a probability on $X\times X$ with both projections equal to
$\mu$.  The space $C_2(X,\mu)$ is endowed with a topology which coincides with the weak topology when $X$ is a compact metric space. In this topology, convergence of a sequence of couplings $(\rho_n)$ to some coupling $\rho$ is characterized by
\[
  \forall f,g\in L^2(\mu),\ \int_{X\times X} f\otimes g\, d\rho_n \tend{n}{\infty} \int_{X\times X} f\otimes g\, d\rho.
\]
  This topology turns $C_2(X,\mu)$ into a compact metrizable space. For example, the formula 
  \[
    d(\rho_1,\rho_2):=\sum_{k,\ell\geq1}\frac{|\int f_k\ot f_\ell\,d\rho_1-\int f_k\ot f_\ell\,d\rho_2|}{2^{k+\ell}}
  \]
where $\{f_k: k\ge 1\}$ is a linearly $L^2$-dense set of functions of $L^2$-norm~1 yields a metric compatible with the topology on $C_2(X,\mu)$.

Now, let $T$ be a totally ergodic automorphism of $\xbm$, that is,  $T^m$ is assumed to be ergodic for each $m\neq0$.  Let $J(T^r,T^s)$ stand for the set of {\em joinings} between $T^r$ and $T^s$, \textit{i.e.} those $\rho\in C_2(X,\mu)$ which are $T^r\times T^s$-invariant. By
$J^e(T^r,T^s)$ we denote the subset of \emph{ergodic} joinings, which is never empty when both $T^r$ and $T^s$ are ergodic.

\subsection{The KBSZ criterion} We will need the following version of the result of Bourgain, Sarnak and Ziegler \cite{Bo-Sa-Zi} (see also \cite{Ka}, \cite{Ha}).

\begin{Prop} \label{pr:kbsz}Assume that $(a_n)$ is a bounded sequence of complex numbers. Assume, moreover, that
\beq\label{kbsz}
\limsup_{\stack{r,s\to \infty}{\text{ different primes}}}\left(\limsup_{N\to\infty}
\left|\frac1N\sum_{n\leq N}a_{rn}\ov{a}_{sn}\right|\right)=0.\eeq
Then, for each multiplicative function $\bnu:\N\to\C$, $|\bnu|\leq 1$, we have
\beq\label{eq:sa2}
\lim_{N\to\infty}\frac1N\sum_{n\leq N}a_n\cdot \bnu(n)=0.\eeq
\end{Prop}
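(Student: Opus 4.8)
The statement is the orthogonality criterion of Kátai and of Bourgain--Sarnak--Ziegler, so the plan is to reprove it by a second-moment (bilinear) argument that feeds the prime-dilation correlations of \eqref{kbsz} into a single application of Cauchy--Schwarz. Fix $\varepsilon>0$. By \eqref{kbsz} there is a threshold $D$ such that $\limsup_N\left|\frac1N\sum_{n\le N}a_{rn}\overline{a}_{sn}\right|\le\varepsilon$ for all distinct primes $r,s>D$. Let $\mathcal P$ denote the set of primes in $(D,Q]$; as $Q\to\infty$ we have $|\mathcal P|\to\infty$ while every element of $\mathcal P$ exceeds $D$, so all the associated correlations are $\le\varepsilon$.

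The computable core of the argument is the following mean-square estimate. Form the prime-dilation average $\Phi_N(n):=\frac1{|\mathcal P|}\sum_{p\in\mathcal P}\bnu(p)a_{pn}$ and expand $\frac1N\sum_{n\le N}|\Phi_N(n)|^2$. The diagonal terms $p=q$ contribute at most $\frac1{|\mathcal P|^2}\sum_{p\in\mathcal P}\frac1N\sum_{n\le N}|a_{pn}|^2\le 1/|\mathcal P|$, while the off-diagonal part equals $\frac1{|\mathcal P|^2}\sum_{p\ne q}\bnu(p)\overline{\bnu(q)}\cdot\frac1N\sum_{n\le N}a_{pn}\overline{a}_{qn}$, whose modulus is at most $\varepsilon$ by the choice of $D$: there are $|\mathcal P|^2$ pairs, each full-range correlation is $\le\varepsilon$ in the limit, and the weight $1/|\mathcal P|^2$ exactly cancels the count. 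Hence $\limsup_N\frac1N\sum_{n\le N}|\Phi_N(n)|^2\le \frac1{|\mathcal P|}+\varepsilon$, a quantity we will drive to $0$. Note that the crucial feature making this clean is that the correlations entering here are over the \emph{full} window $n\le N$, normalised by $N$, which is exactly the form supplied by \eqref{kbsz}.

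It then remains to bound the original average $\frac1N\sum_{n\le N}a_n\bnu(n)$ by the (small) mean square of $\Phi_N$. Here I would use the multiplicativity $\bnu(pn)=\bnu(p)\bnu(n)$, valid off the negligible set of $n$ with $p\mid n$ (which costs $O(\sum_{p>D}p^{-2})=O(1/D)$), together with a Turán--Kubilius/Cauchy--Schwarz step: by replacing $a_n\bnu(n)$ with its average over the prime dilations $n\mapsto pn$, $p\in\mathcal P$, one relates $\left|\frac1N\sum_{n\le N}a_n\bnu(n)\right|$ to $\big(\frac1N\sum_{n\le N}|\Phi_N(n)|^2\big)^{1/2}$, and thereby obtains $\limsup_N\left|\frac1N\sum_{n\le N}a_n\bnu(n)\right|^2\lesssim \frac1{|\mathcal P|}+\varepsilon+\frac1D$. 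Letting $Q\to\infty$ (so $|\mathcal P|\to\infty$) and then $D\to\infty$ (so $\varepsilon\to0$) yields \eqref{eq:sa2}. The main obstacle is precisely this last relating step: dilation by $p$ shifts the summation window from $[1,N]$ to $[1,pN]$, and some values $\bnu(p)$ may be small, so matching the normalisations and showing that the prime-dilated average genuinely controls the original Cesàro average --- uniformly in $N$ --- is the delicate heart of the criterion. I would handle it by passing to a subsequence of $N$ realising the $\limsup$ and exploiting the uniform boundedness of $(a_n)$ and $\bnu$ to absorb the window mismatch.
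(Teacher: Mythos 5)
Your plan (second moment over prime dilations plus multiplicativity) is the right family of ideas --- it is the K\'atai/Bourgain--Sarnak--Ziegler argument --- but the proof has a genuine gap, sitting exactly where you flagged it: the bridge between $\frac1N\sum_{n\le N}a_n\bnu(n)$ and the mean square of $\Phi_N$. The correct bridge is the Tur\'an--Kubilius identity: with $\omega_{\mathcal P}(n):=\#\{p\in\mathcal P: p\mid n\}$ and $D(\mathcal P):=\sum_{p\in\mathcal P}1/p$, one has $\sum_{n\le N}|\omega_{\mathcal P}(n)-D(\mathcal P)|^2\ll N\,D(\mathcal P)$, whence
\[
D(\mathcal P)\sum_{n\le N}a_n\bnu(n)=\sum_{p\in\mathcal P}\bnu(p)\sum_{m\le N/p}\bnu(m)\,a_{pm}+O\Bigl(N\sqrt{D(\mathcal P)}+N\sum_{p\in\mathcal P}p^{-2}\Bigr).
\]
Two features of this identity are incompatible with your $\Phi_N$. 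First, the weights are harmonic ($1/p$, through $D(\mathcal P)$), not uniform ($1/|\mathcal P|$): what must tend to infinity is $D(\mathcal P)$, not $|\mathcal P|$, and the fluctuation cost is $O(1/\sqrt{D(\mathcal P)})$, not the $O(1/D)$ you budgeted (that only covers the $p\mid m$ corrections). Second --- and this is the step that actually fails, rather than one that boundedness can absorb --- the inner windows $m\le N/p$ vary by the unbounded factor $Q/D$ across $p\in\mathcal P\subset(D,Q]$. If you nevertheless run a single global Cauchy--Schwarz over all of $\mathcal P$, the off-diagonal bookkeeping produces $\varepsilon\sum_{p\ne q\in\mathcal P}1/\max(p,q)\asymp\varepsilon\, Q/(\log Q)^2$, which swamps the normalisation $D(\mathcal P)^2\asymp(\log\log Q)^2$: the resulting bound \emph{diverges} as $Q\to\infty$ for fixed $\varepsilon$, so ``let $Q\to\infty$, then $D\to\infty$'' is not available. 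Passing to a subsequence realising the $\limsup$ does not help; the mismatch is a fixed quantitative failure at every $N$, not a compactness issue. Your mean-square computation for $\Phi_N$ is correct but, with uniform weights and a common window, it is simply not linked to the target average by any identity.

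The missing idea is to split $\mathcal P$ into blocks of comparable primes, say $\mathcal P_j=\mathcal P\cap(2^j,2^{j+1}]$, and apply Cauchy--Schwarz separately in each block, where the windows $m\le N/p$ are comparable; this yields $\bigl|\sum_{p\in\mathcal P_j}\bnu(p)\sum_{m\le N/p}\bnu(m)a_{pm}\bigr|\ll N\,D(\mathcal P_j)\bigl(\sqrt{\varepsilon}+|\mathcal P_j|^{-1/2}\bigr)$, and summing over $j$ (the weights $D(\mathcal P_j)/D(\mathcal P)$ add up to $1$) gives $\limsup_N\frac1N\bigl|\sum_{n\le N}a_n\bnu(n)\bigr|\ll\sqrt{\varepsilon}+\max_j|\mathcal P_j|^{-1/2}+D(\mathcal P)^{-1/2}$, small once $D$ is large and $Q$ is enormous. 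This block structure is precisely the content of the bilinear lemma in \cite{Bo-Sa-Zi} (see also Harper's note \cite{Ha}). For comparison, the paper does not reprove the criterion at all: it invokes \cite{Bo-Sa-Zi} directly, noting only that their argument goes through when a finite set of primes is omitted --- the one adjustment needed because \eqref{kbsz} controls only pairs of large primes. So your attempt is more ambitious than the paper's one-line proof, but as written it reproduces the easy half of the argument (the off-diagonal mean-square estimate) and leaves unproved exactly the hard half.
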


The proof of this proposition follows directly from \cite{Bo-Sa-Zi} by considering a version of their result omitting a finite set of prime numbers.

% \subsection{Proof of Theorem~\ref{tw:sa2}}
% Theorem~\ref{tw:sa2} is one of our main tools and we now will provide its proof.
\begin{proof}[Proof of Theorem~\ref{tw:sa2}] We begin by observing that the condition of having AOP is an invariant of measure-theoretic isomorphism. Therefore, if it holds for $(X,\mu,T)$, it also holds in each of its uniquely ergodic model. Let us fix such a model $(Y,\nu,S)$.
Take $y\in Y$ and $f\in C(Y)$ with $\int_Yf\,d\nu=0$. Let $\vep>0$. In view of~\eqref{math}, for some $M\geq1$, whenever  $r,s\geq M$ are different prime numbers, we have for each $\kappa\in J^e(S^r,S^s)$
\beq\label{mt0}
\left|\int_{Y\times Y} f\ot\ov{f}\,d\kappa\right|%=\left|\int_{Y\times Y} f\ot\ov{f}\,d\kappa-\int_{Y\times Y} f\ot\ov{f}\,d\nu\ot\nu\right|
\leq \vep.\eeq
Now, let $r, s\geq M$ be different prime numbers, and select a subsequence $(N_k)$ so that
\beq\label{mt1}
\frac1{N_k} \sum_{n\leq N_k} \delta_{(S^{rn}y,S^{sn}y)}\to \rho.\eeq
Then, by the unique ergodicity of $S$, which implies the unique ergodicity of $S^r$ and $S^s$, it follows that $\rho\in J(S^r,S^s)$. Let 
$$
\rho=\int_{J^e(S^r,S^s)} \kappa\,dP(\kappa)$$
stand for the ergodic decomposition into ergodic joinings between $S^r$ and $S^s$ of $\rho$ (here, we use again that the non-trivial powers of $S$ are ergodic). Since $f$ is continuous, in view of~\eqref{mt1}, we have
$$
\frac1{N_k} \sum_{n\leq N_k} f(S^{rn}y)\ov{f(S^{sn}y)}\to \int_{Y\times Y}f\ot\ov{f}\,d\rho.$$
But, in view of~\eqref{mt0},
$$
\left|\int_{Y\times Y} f\ot \ov{f}\,d\rho\right|=\left|\int_{J^e(S^r,S^s)} \left(\int_{Y\times Y} f\ot\ov{f}\,d\kappa\right)\,dP(\kappa)\right|\leq\vep.$$
It follows that
$$
\limsup_{N\to \infty}\left|\frac1N\sum_{n\leq N} f(S^{rn}y)\ov{f(S^{sn}y)}\right|\leq \vep.$$
We have obtained that condition~\eqref{kbsz} holds for $a_n:=f(S^ny)$, $n\geq0$, and the result follows from Proposition~\ref{pr:kbsz}.\end{proof}
% 
% \begin{Remark} We can also formulate a relative version of the above criterion. We consider $(T,X,\mu)$ and a factor of it given by a $T$-invariant sub-$\sigma$-algebra $\ca\subset\cb$. We assume that
% 
% $$\lim_{r\neq s, primes} \sup_{\kappa\in J^e(T^r,T^s)}
%     d(\kappa,\{\widehat{\rho}:\: \rho\in J^e(T^r|_{\ca},T^s|_{\ca}\})=0,
% $$
% where $\widehat{\rho}$ stands for the relatively independent extension of $\rho$. Assume now that $(S,Y,\nu)$ is a uniquely ergodic model of $(T,X,\mu)$ in which $\ca$ is represented by $\ca'\subset\cc$  a factor which is topological and such that $\E(f|\ca')$ has a continuous version for each $f\in C(Y)$. Then for each $f\in C(Y)$, $f\perp L^2(\ca')$, we have
% $$
% \frac1N\sum_{n\leq N}f(S^ny)\mob(n)\to0\;\mbox{when $N\to\infty$.}
% $$
% The proof of the above statement goes along the same lines as the proof of Theorem~\ref{tw:sa2}.
% \end{Remark}

\subsection{Special uniquely ergodic models}
\label{clevermodel}Our idea in this section will be, given a uniquely ergodic system $(X,T)$ (with $\mu$ as unique invariant probability measure),  to build many new uniquely ergodic models of the measure-theoretic system $(X,\mu,T)$. Any such model will depend on the choice of an increasing sequence of integers $1=b_1<b_2<\cdots$ with $b_{k+1}-b_k\to\infty$, and of an arbitrary sequence of points $(x_k)$ in $X$.

We define a sequence $\underline{y}=(y_n)\in X^{\N}$ by setting $y_n:=T^n x_k$ whenever $b_k\le n< b_{k+1}$:
\[
  \underline{y}=(Tx_1,\ldots,T^{b_2-1}x_1,
  T^{b_2}x_2,\ldots,T^{b_3-1}x_2,T^{b_3}x_3,\ldots).
\]
Let $S$ denote the shift in $X^{\N}$, where $X^{\N}$ is considered with the product topology, and set
$$
Y:=\ov{\{S^n\underline{y}:\:n\geq0\}}\subset X^{\N}.$$

\begin{Prop}
  The topological dynamical system $(Y,S)$  defined above is uniquely ergodic. 
  The unique $S$-invariant probability measure on $Y$ is
  the graph measure $\nu$ determined  by the formula
$$
\nu(A_1\times A_2\times\ldots\times A_m\times X\times\ldots):=\mu(A_1\cap T^{-1}A_2\cap\ldots\cap T^{-m+1}A_m).
$$
Moreover, the measure-theoretic dynamical system $(Y,\nu,S)$ is isomorphic to $(X,\mu,T)$.
\end{Prop}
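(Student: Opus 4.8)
The plan is to funnel everything through the map $\pi\colon X\to X^{\N}$ defined by $\pi(x)=(x,Tx,T^2x,\dots)$. It is continuous and injective (the zeroth coordinate recovers $x$), hence a homeomorphism onto the compact set $\pi(X)$, and it intertwines the dynamics, $\pi\circ T=S\circ\pi$. Checking the defining formula coordinate by coordinate shows that the graph measure $\nu$ is nothing but $\pi_*\mu$; in particular $\pi$ is a measure-theoretic isomorphism of $(X,\mu,T)$ onto $(\pi(X),\nu,S)$. Thus, once we know that $\nu$ is the unique $S$-invariant measure on $Y$ and that it is carried by $\pi(X)$, both the unique ergodicity and the isomorphism with $(X,\mu,T)$ follow at once. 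Since $Y$ is compact and $S$ continuous, $S$-invariant measures on $Y$ exist (Krylov--Bogolyubov), so the whole statement reduces to the single assertion that \emph{every} $S$-invariant probability measure $\PP$ on $Y$ is concentrated on $\pi(X)$, i.e.\ on $D:=\{z\in Y:\ z_{n+1}=Tz_n\ \text{for all }n\ge0\}$.

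The heart of the matter, and the step I expect to be the main obstacle, is a structural description of $Y$ forced by the growth condition $b_{k+1}-b_k\to\infty$. Call $n$ a \emph{defect} of $z$ if $z_{n+1}\neq Tz_n$. Along $\underline y$ the defects occur exactly at the block boundaries $b_k$, since $y_{n+1}=Ty_n$ unless $n+1$ is some $b_k$; for a shifted point $S^{m}\underline y$ the defects are the boundaries translated by $-m$, and consecutive boundaries are separated by the gaps $b_{k+1}-b_k$, which tend to infinity. I would prove that, in consequence, \emph{every} $z\in Y$ has at most one defect. Writing $z=\lim_i S^{m_i}\underline y$ and letting $d_i\ge1$ be the distance from $m_i$ to the next boundary, one passes to a subsequence along which either $d_i\to\infty$ or $d_i$ is constant equal to some $d$. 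In the first case the first defect of $S^{m_i}\underline y$ escapes to infinity and $z\in\pi(X)$ has no defect. In the second case the only defect that can survive in the limit is the one at position $d-1$: all later defects of $S^{m_i}\underline y$ sit at distance at least $b_{k+1}-b_k\to\infty$ beyond it, and since $\{w:w_{n+1}=Tw_n\}$ is closed, $z$ is good at every coordinate except possibly $d-1$.

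Granting this, the conclusion is quick. For every $z\in Y$ the total number of defects is at most one, so
\[
\frac1N\sum_{n=0}^{N-1}\raz_{B_0}(S^n z)=\frac1N\,\#\{0\le n<N:\ z_{n+1}\neq Tz_n\}\le\frac1N\longrightarrow0,
\]
where $B_0:=\{w\in Y: w_1\neq Tw_0\}$ is the (open, hence Borel) set of points with a defect at the origin. For any $S$-invariant $\PP$, invariance gives $\int \raz_{B_0}\circ S^n\,d\PP=\PP(B_0)$ for all $n$, so averaging and letting $N\to\infty$ (bounded convergence) yields $\PP(B_0)=0$. Since $Y\setminus D=\bigcup_{n\ge0}S^{-n}B_0$ and each $S^{-n}B_0$ is $\PP$-null by invariance, we get $\PP(D)=1$. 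Finally $D\subseteq\pi(X)$, and on $\pi(X)$ the map $\pi^{-1}$ is a topological conjugacy carrying $S$ to $T$; hence $(\pi^{-1})_*\PP$ is a $T$-invariant probability measure on $X$, necessarily equal to $\mu$ by unique ergodicity of $(X,T)$. Therefore $\PP=\pi_*\mu=\nu$, which is exactly what remained to be shown.
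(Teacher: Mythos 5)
Your overall architecture is sound and in fact a little different from the paper's: you reduce everything to showing that every $S$-invariant measure is carried by the ``diagonal'' set $D=\{z:\ z_{n+1}=Tz_n\ \forall n\}$ and then transport by $\pi^{-1}$, whereas the paper shows directly that \emph{every point} of $Y$ is generic for $\nu$ (first $\underline{y}$ itself, via the asymptotic invariance of the first-coordinate marginals of the empirical measures, then arbitrary $\underline{z}\in Y$ via the same structural fact you use). Both routes are legitimate. However, there is a genuine error in your key structural claim. It is \emph{not} true that every $z\in Y$ has at most one defect: the point $\underline{y}$ itself belongs to $Y$ and has a defect at \emph{every} block boundary, i.e.\ at each position $b_k-1$, hence infinitely many — you even note this two sentences earlier, so the claim is internally inconsistent. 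The same holds for every orbit point $S^m\underline{y}$. Your limit argument silently assumes $m_i\to\infty$ (which is exactly the hypothesis in the paper's ``easy fact''); when the $m_i$ are bounded you may extract a constant subsequence, $z=S^m\underline{y}$, and the later defects do not escape to infinity. Consequently the bound
\[
\frac1N\,\#\{0\le n<N:\ S^nz\in B_0\}\ \le\ \frac1N
\]
is false for such $z$, and the step ``averaging and letting $N\to\infty$'' does not go through as written.

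The gap is repairable without changing your strategy: what is true, and what you actually need, is that for \emph{every} $z\in Y$ the set of defects has density zero. For proper limit points ($m_i\to\infty$) this follows from the at-most-one-defect fact; for orbit points $S^m\underline{y}$ the defects sit at the translated boundaries $b_k-m-1$, and $b_{k+1}-b_k\to\infty$ forces $\#\{k:\ b_k\le N\}/N\to0$. Hence $\frac1N\sum_{n<N}\raz_{B_0}(S^nz)\to0$ pointwise (though not uniformly), and since these averages are bounded by $1$, dominated convergence together with $\int\raz_{B_0}\circ S^n\,d\PP=\PP(B_0)$ still yields $\PP(B_0)=0$, after which the rest of your argument is correct. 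So: right idea, wrong intermediate statement, and the quantitative $1/N$ bound must be weakened to $o(1)$.
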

% 
% On $X^\infty$ we consider the graph measure $\kappa$ determined  by the formula:
% $$
% \kappa(A_1\times A_2\times\ldots\times A_m\times X\times\ldots)=\nu(A_1\cap T^{-1}A_2\cap\ldots\cap T^{-m+1}A_m)$$
% for each $m\geq1$ and $A_1,\ldots,A_m\in\cb(X)$.
% We claim now that:
% \beq\label{tm0}
% \mbox{$(T^{\N}, X^{\N},\kappa)$ is isomorphic to $(T,X,\nu)$.}
% \eeq
% \beq\label{tm1}
% \mbox{$\underline{y}$ is generic for $\nu$.}
% \eeq
% \beq\label{tm2}
% \mbox{If $\ell_k\to\infty$ then $(S,Y)$ is uniquely ergodic.}\eeq

\begin{proof}
  We first prove that $\underline{y}$ is generic for the measure $\nu$ defined in the statement of the proposition.
  Indeed, let $\kappa$ be a probability measure on $Y$ for which $\underline{y}$ is quasi-generic, \textit{i.e.} assume that there exists a
  sequence of integers $N_j\to\infty$ such that
$$
\frac1{N_j}\sum_{n\leq N_j}\delta_{S^n\underline{y}}\to\kappa.
$$
For fixed $j\ge 1$, the projection of the left-hand side on the first coordinate is
\begin{multline*}
\nu_j:=\frac1{N_j}\left(\sum_{b_1\leq n <  b_2} \delta_{T^nx_1}+
\sum_{b_2\leq n<  b_3} \delta_{T^{n}x_2}+\ldots\right.\\
+ \left.\sum_{b_{k-1}\leq n <   b_k} \delta_{T^n x_{k-1}}+
\sum_{b_k\leq n< b_k+u} \delta_{T^nx_k}\right),
\end{multline*}
where $b_k+u=N_j<b_{k+1}$. Then, the difference the pushforward of $\nu_j$ by $T$ and $\nu_j$  is
\begin{multline*}
T_\ast\nu_j-\nu_j=\frac1{N_j}
\left(
\delta_{T^{b_2}x_1}-\delta_{Tx_1}+
\delta_{T^{b_3}x_2}-\delta_{T^{b_2}x_2}+\ldots+\right.\\
\left.\delta_{T^{b_k}x_{k-1}}-\delta_{T^{b_{k-1}}x_{k-1}}
+\delta_{T^{b_k+u+1}x_k}-\delta_{T^{b_k}x_k}
\right),
\end{multline*} 
and this measure goes weakly to zero as $b_{k+1}-b_k\to\infty$. Hence the limit of $\nu_j$ is $T$-invariant and,
by unique ergodicity, the projection of $\kappa$ on the first coordinate has to
be equal to $\mu$. Now, using again $b_{k+1}-b_k\to\infty$, we see that the proportion of integers
$n\le N_j$ such that the second coordinate of $S^n \underline{y}$ is not the image by $T$ of the first coordinate
of $S^n \underline{y}$ go to zero as $j\to\infty$, and it follows that, $\kappa$-almost surely, 
the second coordinate of $\underline{z}\in Y$ is the image by $T$ of the first coordinate
of $\underline{z}$. The same argument works for any coordinate and we conclude that $\kappa=\nu$.

Now, the fact that each $\underline{z}\in Y$ is also generic for the same measure $\nu$ is a direct consequence of the
following easy fact: 
If $n_j\to\infty$ and $\underline{z}=\lim_{j\to\infty}S^{n_j}\underline{y}$, then either there exists $z\in X$ such that 
$\underline{z}=(z,Tz,T^2z,\ldots)$, or there exist $z_1,z_2\in X$ and $\ell\ge0$ such that
$$
\underline{z}=(z_1,Tz_1,\ldots,T^{\ell}z_1,z_2,Tz_2,T^2z_2,\ldots).$$
Indeed, we can approximate any ``window'' $\underline{z}[1,M]$ by
$S^{n_j}\underline{y}[1,M]=\underline{y}[n_j+1,n_j+M]$, and when $n_j\to\infty$, 
such a window has at most  one point of ``discontinuity'', that is, it contains at most once
two consecutive coordinates which are not successive images by $T$ of some $x_k$.

Finally, by the construction of $\nu$, the map $x\mapsto (x,Tx,T^2x,\ldots)$ is clearly an isomorphism
between $(X,\mu,T)$ and $(Y,\nu,S)$.
\end{proof}

\begin{proof}[Proof of Theorem~\ref{tw:sa3}]
  Apply the conclusion of Theorem~\ref{tw:sa2} to the specific uniquely ergodic model $(Y,S)$ which is constructed
  above, starting from the point $\underline{y}$ and taking an arbitrary continuous function of the first coordinate.
\end{proof}

\section{Asymptotic orthogonality of powers for quasi-discrete spectrum automorphisms}
\subsection{The key lemma}
We say that $c\in \bs^1$ is \emph{irrational} if $c^n\neq1$ for each integer $n\neq0$. Recall that an ergodic automorphism $T$ is
totally ergodic if and only if all its eigenvalues except~1 are irrational.
\begin{Lemma}
  \label{lemma:key}
  Let $c_1$ and $c_2$ be irrational in $\bs^1$. Then there exists at most one pair $(r,s)$ of mutually prime natural numbers
  such that $c_1^r=c_2^s$.
\end{Lemma}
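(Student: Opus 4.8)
The plan is to argue by contradiction, exploiting the fact that here ``irrational'' means precisely ``not a root of unity''. Suppose $(r_1,s_1)$ and $(r_2,s_2)$ are two coprime pairs of natural numbers with $c_1^{r_1}=c_2^{s_1}$ and $c_1^{r_2}=c_2^{s_2}$. First I would eliminate $c_2$ by cross-raising these two identities to suitable powers: raising the first to the power $s_2$ and the second to the power $s_1$ gives
\[
c_1^{\,r_1 s_2}=c_2^{\,s_1 s_2}=c_1^{\,r_2 s_1},\qquad\text{hence}\qquad c_1^{\,r_1 s_2 - r_2 s_1}=1.
\]
Since $c_1$ is irrational, i.e.\ $c_1^n\neq1$ for every $n\neq0$, this forces the exponent to vanish, so $r_1 s_2 = r_2 s_1$.

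Once this Diophantine identity is in hand, the conclusion is a purely arithmetic consequence of coprimality. From $r_1 s_2 = r_2 s_1$ and $\gcd(r_1,s_1)=1$ I would deduce $r_1\mid r_2$ (as $r_1$ divides $r_2 s_1$ and is prime to $s_1$); symmetrically $\gcd(r_2,s_2)=1$ gives $r_2\mid r_1$, so $r_1=r_2$, and then $s_1=s_2$ follows immediately. Hence the two pairs coincide, establishing that at most one such pair exists.

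I do not expect any serious obstacle, the statement being elementary. The only points deserving care are the following. First, one must invoke irrationality in the form $c^n\neq1$ for $n\neq0$ at exactly the right moment, namely to kill the combined exponent $r_1 s_2 - r_2 s_1$. Second, one should note that the hypotheses rule out degenerate pairs: if some entry were $0$ (should $0$ be counted among the natural numbers), coprimality would force the companion entry to equal $1$, whence $c_1=1$ or $c_2=1$, contradicting irrationality, so in fact all four numbers are $\geq1$. Finally, eliminating $c_1$ instead of $c_2$ yields the very same relation $r_1 s_2 = r_2 s_1$, so there is nothing asymmetric to verify.
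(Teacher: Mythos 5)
Your proof is correct, and it reaches the paper's key intermediate identity by a cleaner route. The paper's argument first extracts an $s$-th root $d$ of $c_1$ and adjusts it by an $s$-th root of unity (using $\gcd(r,s)=1$ to make the map $\omega\mapsto\omega^r$ a bijection on $s$-th roots of unity) so that simultaneously $d^s=c_1$ and $d^r=c_2$; it then rewrites the second relation $c_1^{r'}=c_2^{s'}$ as $d^{sr'}=d^{rs'}$ and invokes irrationality of $d$ (inherited from $c_1$) to get $sr'=rs'$. You obtain the same identity $r_1s_2=r_2s_1$ by simply cross-raising the two given relations and applying irrationality of $c_1$ directly to the exponent $r_1s_2-r_2s_1$, with no auxiliary root and no root-of-unity adjustment. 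From that point on the two arguments coincide verbatim: coprimality of each pair forces $r_1\mid r_2$ and $r_2\mid r_1$, hence equality. Your version is shorter and avoids the only mildly delicate step of the paper's proof; your side remarks (that irrationality is used exactly once, on the combined exponent, and that degenerate pairs cannot occur) are accurate but not needed, since in the intended reading ``natural numbers'' are positive and in the application $r$ and $s$ are primes anyway.
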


\begin{proof}
Let $(r,s)$ and $(r',s')$ be two pairs of mutually prime natural numbers satisfying $c_1^r=c_2^s$, and $c_1^{r'}= c_2^{r'}$.
  Let $d\in \bs^1$ be such that $d^s=c_1$. We have $(d^r)^s=c_1^r=c_2^s$. Multiplying if necessary $d$ by an appropriate
  $s$-th root of the unity, and using the fact that $r$ and $s$ are mutually prime, we can assume that we also have $d^r=c_2$.
  The equality $c_1^{r'}= c_2^{r'}$ now yields $d^{r's}=d^{s'r}$. But $d$ is also irrational, hence $r's=sr'$. Since $(r,s)=1=(r',s')$, $r=r'$ and $s=s'$.
\end{proof}

\subsection{AOP for discrete spectrum}
Recall that discrete spectrum automorphisms are characterized by the fact that their eigenfunctions form a linearly dense subset of $L^2$. They fall into the more general case of quasi-discrete spectrum ones, but we nevertheless include a 
direct proof that they have AOP as soon as they are totally ergodic, since this will introduce the main ideas in a simpler context. 

Assume that $T$ is a totally ergodic, discrete spectrum automorphism of $\xbm$.
Let $c_1=1, c_2,c_3,\ldots\in \bs^1$ be its eigenvalues, and let $f_1, f_2, \ldots$ be corresponding eigenfunctions: $f_i\circ T=c_i\cdot f_i$, $|f_i|=1$. Note that $c_i$ is irrational and $\int f_i\,d\mu=0$ whenever $i\ge2$.
% As the functions $f_i$, $i\geq1$, form a linearly dense subset of $L^2\xbm$, the formula (cf.\ Section~\ref{secjoi})
% $$
% d(\rho_1,\rho_2)=\sum_{i,j\geq1}\frac1{2^{i+j}}\left|\int f_i\ot\ov{f}_j\,d\rho_1 - \int f_i\ot\ov{f}_j\,d\rho_2\right|$$
% defines a metric on $C_2(X,\mu)$ compatible with the weak topology of measures on $C_2(X,\mu)$.
% 
% Since $T$ is assumed to be totally ergodic, we can speak about $C_2(X,\mu)\supset J(T^r,T^s)\supset J^e(T^r,T^s)$ for each $r,s\geq1$.
% Suppose we want to show that for $r,s$  relatively prime, we have
% $$\sup_{\rho\in J^e(T^r,T^s)}d(\rho,\mu\ot\mu)<\vep.$$ Since $\int f_i\ot\ov{f}_j\,d\mu\ot\mu=0$, it is enough to show that for all $\rho\in J^e(T^r,T^s)$, we have
% $$
% \int f_i\ot\ov{f}_j\,d\rho=0\mbox{ for $i,j\leq K$, $K=K(r,s)\to \infty$ when $r,s\to\infty$, $(r,s)=1$.}$$
Let us prove that the AOP property holds for $T$. Since the linear subspace spanned by $f_2,f_3,\ldots$ is dense in the subspace of square integrable functions with zero integral, it is enough to show that, for any fixed $i,j\ge 2$,
$$
\int f_i\ot\ov{f_j}\,d\rho=0, \ \forall \rho\in J^e(T^r,T^s),\mbox{ if $r$ and $s$ are large enough and $(r,s)=1$.}$$
But we have
$$
\int f_i\ot\ov{f}_j\,d\rho=\int (f_i\ot\raz_X)\cdot(\raz_X\ot\ov{f}_j)\,d\rho,$$
where $f_i\ot\raz_X$ is an eigenfunction for $(T^r\times T^s,\rho)$ corresponding to the eigenvalue $c_i^{r}$, while
$\raz_X\ot f_j$ is an eigenfunction for $(T^r\times T^s,\rho)$ corresponding to $c_j^s$. 
But, if these two eigenvalues are different, the functions
$f_i\ot \raz_X$ and $\raz_X\ot f_j$ are orthogonal. It is then enough to observe that, by Lemma~\ref{lemma:key},  the equality
$c_i^r=c_j^s$ has at most one solution with $(r,s)=1$.

\begin{Example}
\label{ex:one}
If we consider $T$ an ergodic rotation whose
group of eigenvalues is of the form $\{e^{2\pi iq\alpha}:\: q\in\Q\}$ and $\alpha\in[0,1)$ is irrational, then by the Halmos-von Neumann theorem  (\textit{e.g.}\ \cite{Gl}) it follows that all non-zero powers of $T$ are isomorphic. This is an extremal example of an automorphism with non-disjoint powers 
for which the AOP property holds.\end{Example}

\subsection{Quasi-discrete spectrum}
\label{sec:qds}

We denote by $M=M\xbm$ the multiplicative group of functions $f\in  L^2\xbm$ satisfying $|f|=1$.
Let $T$ be an ergodic automorphism of $\xbm$. We define on $M$ the group homomorphism $W_T$ by setting
\[
  W_T(f) := f\circ T / f.
\]
Note that $(W_Tf)\circ T=W_T (f\circ T)$.

Let $E_0(T)\subset\bs^1$ denote the group of eigenvalues
of $T$, which we also consider as a subgroup of (constant) functions in $M$.
Then, for each integer $k \geq1$, we inductively set
$$
E_k(T) := \{f\in M: W_T f \in E_{k-1}(T)\}.$$

It is easily proved by induction on $k$ that $E_k(T)$ is the subgroup of $M$ constituted of all $f\in M$ satisfying $W_T^k f\in E_0(T)$, and
that, by the ergodicity of $T$,
\[f\in E_k(T)\setminus E_{k-1}(T) \Longleftrightarrow W_T^k f\in E_0(T)\setminus\{1\}.\]

The elements of $\bigcup_{k\geq0}E_k(T)$ are called {\em quasi-eigenfunctions}. Clearly, $E_k(T)\subset E_{k+1}(T)$, $k\geq0$.

We denote by
$E_0^{\ir}(T)$ the set of irrational eigenvalues of $T$, and for each $k\ge1$, we set
\[
  E_k^{\ir} (T) := \{f\in E_k(T): W_T^k f\in E_0^{\ir}(T)\}.
\]

\begin{Lemma}
\label{lemma:integrale_nulle}
  Let $T$ be an ergodic automorphism of $\xbm$, $k\ge1$ and $f\in E_k^\ir(T)$. Then
  \[
    \int_X f\, d\mu = 0.
  \]
\end{Lemma}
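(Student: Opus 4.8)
The plan is to induct on $k$, proving the statement for all ergodic $T$ simultaneously. For the base case $k=1$, membership $f\in E_1^\ir(T)$ means $W_T f=f\circ T/f=c$ for some irrational $c\in\bs^1$, so $f\circ T=cf$ is a genuine eigenfunction; the $T$-invariance of $\mu$ then gives $\int_X f\,d\mu=\int_X f\circ T\,d\mu=c\int_X f\,d\mu$, whence $\int_X f\,d\mu=0$ since $c\neq1$.

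For the inductive step I fix $k\ge2$, assume the lemma at level $k-1$, and take $f\in E_k^\ir(T)$ with $c:=W_T^k f\in E_0^\ir(T)$. The key object I would introduce is, for each integer $m$, the modulus-one function $G_m:=(f\circ T^m)\ov f\in M$, so that $f\circ T^m=G_m f$ (using $|f|=1$). First I would record that $u:=W_T^{k-1}f$ is an eigenfunction with eigenvalue $c$: indeed $W_T u=W_T^k f=c$ says $u\circ T=cu$, hence $u\circ T^m=c^m u$. Since $W_T^{k-1}$ is a homomorphism of $M$ that commutes with composition by $T$ (iterating $(W_Tf)\circ T=W_T(f\circ T)$), I can compute
\[
W_T^{k-1}G_m=\big((W_T^{k-1}f)\circ T^m\big)\cdot\ov{W_T^{k-1}f}=(u\circ T^m)\ov u=c^m|u|^2=c^m .
\]
As $c$ is irrational, $c^m$ is irrational for every $m\neq0$ (if $(c^m)^n=c^{mn}=1$ with $n\neq0$ then $mn\neq0$, a contradiction), so $G_m\in E_{k-1}^\ir(T)$; by the induction hypothesis, $\int_X G_m\,d\mu=0$ for all $m\neq0$.

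The payoff is that $\int_X G_m\,d\mu=\int_X(f\circ T^m)\ov f\,d\mu=\langle f\circ T^m,f\rangle$, so the previous step says precisely that the functions $\{f\circ T^m:m\ge0\}$ form an orthonormal system in $L^2(\mu)$: each has norm $1$ since $|f|=1$, and for $m\neq m'$ one has $\langle f\circ T^m,f\circ T^{m'}\rangle=\langle f\circ T^{m-m'},f\rangle=0$ by $T$-invariance. Writing $a:=\int_X f\,d\mu$, invariance also gives $\langle \raz,f\circ T^m\rangle=\ov{\int_X f\circ T^m\,d\mu}=\ov a$ for every $m$, where $\raz$ denotes the constant function $1$. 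Bessel's inequality for $\raz$ against this orthonormal system then yields
\[
\sum_{m\ge0}|a|^2=\sum_{m\ge0}\big|\langle \raz,f\circ T^m\rangle\big|^2\le\|\raz\|_2^2=1,
\]
and an infinite sum of $|a|^2$ can be finite only if $a=0$, which closes the induction.

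The hard part is the middle step: guessing the right family $G_m=(f\circ T^m)\ov f$ and checking that it lands in $E_{k-1}^\ir(T)$, since this is exactly what lets the induction hypothesis turn "all self-correlations $\langle f\circ T^m,f\rangle$ vanish" — i.e.\ $f$ spans a copy of a Lebesgue-spectrum summand — into the conclusion through a one-line Bessel argument. The remaining ingredients are routine; the only points deserving a word of care are that $W_T^{k-1}$ is multiplicative and commutes with $f\mapsto f\circ T$, and that irrationality of $c$ passes to every nonzero power $c^m$.
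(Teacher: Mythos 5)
Your proof is correct and follows essentially the same route as the paper: your $G_m=(f\circ T^m)\overline f$ is exactly the paper's $g^{(m)}=W_Tf\cdot (W_Tf\circ T)\cdots (W_Tf\circ T^{m-1})$, and the computation placing it in $E_{k-1}^{\ir}(T)$ so that the induction hypothesis kills its integral is the same key step. The only (cosmetic) difference is the conclusion: the paper phrases the vanishing of all nonzero autocorrelations as ``the spectral measure of $f$ is Lebesgue,'' whereas you extract the same consequence via Bessel's inequality for the orthonormal system $\{f\circ T^m\}$.
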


\begin{proof}
  We prove the result by induction on $k$. For $k=1$, the conclusion holds for each eigenfunction $f$
  whose eigenvalue is different from~1. Assume that the result is proved for $k\ge1$, and consider $f\in E_{k+1}^\ir(T)$.
  Set $g:=W_Tf\in E_k^\ir(T)$, so that $f\circ T=gf$. Then, for each $n\ge1$, we have
  \[
    f\circ T^n = g^{(n)}\cdot f,
  \]
  where
  \[
    g^{(n)} := g\cdot g\circ T \cdots g\circ T^{n-1}\in E_k(T).
  \]
  It follows that
  \beq
  \label{eq:fourier_of_f}
    \int_X f \cdot \overline{(f\circ T^n)} \, d\mu = \overline{\int_X g^{(n)} \, d\mu}.
  \eeq
  Observe now that
  \[
    W_T^k g^{(n)} = W_T^k g\cdot W_T^k (g\circ T)\cdots W_T^k (g\circ T^{n-1}) = \left( W_T^k g\right)^n = \left( W_T^{k+1} f\right)^n,
  \]
  which is an irrational eigenvalue. We therefore have $g^{(n)}\in E_k^\ir(T)$, and by induction hypothesis,
  \[
    \int_X g^{(n)} \, d\mu = 0.
  \]
  Coming back to~\eqref{eq:fourier_of_f}, we see that the spectral measure of $f$ is the Lebesgue measure on the circle,
  which implies the result.
\end{proof}

\begin{Lemma}
  \label{lemma:Tr}
  Let $T$ be an ergodic automorphism of $\xbm$, $k\ge1$, and $f\in E_k(T)$. Then for each integer $r\ge1$,
  \[
    W_{T^r}^k f = \left( W_T^k f\right)^{r^k}.
  \]
\end{Lemma}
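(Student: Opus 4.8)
The plan is to recast the whole statement as a purely algebraic identity in a commutative operator ring acting on the multiplicative group $M$. Let $U\colon M\to M$ be the Koopman automorphism $Uf:=f\circ T$; since $U$ is a group automorphism of the abelian group $M$, the group $M$ becomes a module over the commutative ring $\Z[U,U^{-1}]$, where $\phi=\sum_j n_j U^j$ acts by $\phi\cdot f=\prod_j (f\circ T^j)^{n_j}$. In this language $W_Tf=(U-1)\cdot f$ and $W_{T^r}f=(U^r-1)\cdot f$, and iterating gives $W_T^k f=(U-1)^k\cdot f$ and $W_{T^r}^k f=(U^r-1)^k\cdot f$. Moreover the hypothesis $f\in E_k(T)$ means exactly $W_T^k f\in E_0(T)\subset\bs^1$, and since every constant is killed by $W_T$, this is equivalent to $(U-1)^{k+1}\cdot f=1$ (the trivial element of $M$). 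Thus the target identity becomes the clean statement $(U^r-1)^k\cdot f=r^k\,(U-1)^k\cdot f$ whenever $(U-1)^{k+1}\cdot f=1$.

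Writing $N:=U-1=W_T$, the computation is then immediate. From $U^r-1=(1+N)^r-1=\sum_{i=1}^r\binom{r}{i}N^i$ I expand the $k$-th power $(U^r-1)^k=\bigl(\sum_{i=1}^r\binom{r}{i}N^i\bigr)^k$. Each monomial in the expansion has the form $c\,N^{i_1+\cdots+i_k}$ with all $i_j\ge1$, so its total degree in $N$ is at least $k$; the unique term of minimal degree $k$ comes from choosing $i_1=\cdots=i_k=1$ and carries coefficient $\binom{r}{1}^k=r^k$. Hence $(U^r-1)^k=r^kN^k+N^{k+1}Q$ for some $Q\in\Z[U]$, the ring being commutative so that these rearrangements are legitimate. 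Applying this to $f$ and using $N^{k+1}\cdot f=1$ (whence $N^{k+1}Q\cdot f=Q\cdot(N^{k+1}\cdot f)=Q\cdot 1=1$), only the term $r^kN^k\cdot f$ survives, which is precisely $(W_T^k f)^{r^k}$.

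The single point that really carries the argument is the reformulation of $f\in E_k(T)$ as the annihilation $(U-1)^{k+1}\cdot f=1$; this is what makes all the higher-degree terms disappear, and indeed the identity genuinely fails without it (already for $k=1$ one has $W_{T^r}f=\prod_{j=0}^{r-1}(W_Tf)\circ T^j$, which reduces to $(W_Tf)^r$ only when $W_Tf$ is $T$-invariant, i.e.\ constant, i.e.\ $f\in E_1(T)$). So the ``obstacle'' is conceptual rather than computational: one must first set up the commutative module structure and correctly transcribe the multiplicative relations of $M$ into additive operator relations, after which the result is a one-line binomial estimate. An alternative, equivalent route is a direct induction on $k$: given $f\in E_{k+1}(T)$, set $g:=W_Tf\in E_k(T)$ and feed the factorization $W_{T^r}=(U-1)\sum_{j=0}^{r-1}U^j$ together with $\sum_{j=0}^{r-1}U^j\equiv r\pmod{(U-1)}$ into the inductive hypothesis; but the operator expansion above makes the induction unnecessary.
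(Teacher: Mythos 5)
Your proof is correct, and it takes a genuinely different (non-inductive) route from the paper's. The paper argues by induction on $k$: the base case uses that $W_Tf$ is a constant $c$, so $W_{T^r}f=f\circ T^r/f=c^r$; the inductive step writes $W_{T^r}^{k+1}f=W_{T^r}^{k}\bigl(W_{T^r}f\bigr)$, applies the hypothesis to $W_{T^r}f\in E_k(T)$ to get $\bigl(W_T^k W_{T^r}f\bigr)^{r^k}$, and then evaluates $W_T^kW_{T^r}f=(W_T^kf)\circ T^r/W_T^kf=c^r$ using that $W_T^kf$ is an eigenfunction with eigenvalue $c=W_T^{k+1}f$. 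You instead make $M$ a module over the commutative ring $\Z[U,U^{-1}]$ and reduce everything to the congruence $(U^r-1)^k\equiv r^k(U-1)^k \pmod{(U-1)^{k+1}}$, combined with the observation that $f\in E_k(T)$ is annihilated by $(U-1)^{k+1}$; this is a valid and complete argument (the module structure, the commutation $N^{k+1}Q=QN^{k+1}$, and $Q\cdot 1=1$ are all legitimate). What your version buys is transparency: the hypothesis $f\in E_k(T)$ is seen to be exactly what kills the higher-order binomial terms, and the induction disappears into a one-line degree count. What the paper's version buys is uniformity of technique: it reuses the same $W_{T}$-bookkeeping and the same induction pattern as Lemma~\ref{lemma:integrale_nulle}, without introducing the group-ring formalism. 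One minor remark: your asserted equivalence between $f\in E_k(T)$ and $(U-1)^{k+1}\cdot f=1$ requires ergodicity in the converse direction (a $T$-invariant unimodular function must be constant), but since you only use the forward implication, nothing is at stake.
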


\begin{proof}
  Again, we prove the result by induction on $k$. For $k=1$, take $f\in E_1(T)$ and
  let $c:=W_T f$ be the corresponding eigenvalue (for $T$). Then
  \[
    W_{T^r} f = f\circ T^r / f = c^r =  \left( W_T f\right)^{r}.
  \]
  Now, assume the result is proved for $k\ge1$, and take $f\in E_{k+1}(T)$. Note that $W_{T^r f}\in E_k(T)$,
  so that, using the induction hypothesis, we have
  \[
    W_{T^r}^{k+1} f  = W_{T^r}^{k} \left( W_{T^r f} \right)
     = \left(  W_T^k \,W_{T^r} f \right)^{r^k}
     = \left(  W_T^k f\circ T^r / W_T^k f\right)^{r^k}.
  \]
  But $W_T^k f$ is an eigenfunction of $T$. Denoting by $c:=W_T^{k+1}f$ the corresponding eigenvalue, the right-hand side of the above equality
  becomes $(c^r)^{r^k}=c^{r^{k+1}}$.
  \end{proof}

\begin{Prop}
  \label{prop:at_most_one_rs}
  Let $T$ be a totally ergodic automorphism of $\xbm$, $k\ge1$, $f\in E_k(T)\setminus E_{k-1}(T)$ and $g\in E_k(T)$.
  Then there exists at most one pair $(r,s)$ of mutually prime natural numbers for which we can find $\rho\in J^e(T^r,T^s)$
  satisfying
  \[
    \int_{X\times X} f\otimes \overline g\,d\rho \neq0.
  \]
\end{Prop}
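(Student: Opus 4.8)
The plan is to imitate the discrete-spectrum argument, replacing eigenfunctions by the quasi-eigenfunctions $f$ and $g$ viewed on the product system. First I would fix coprime $r,s$ and an ergodic joining $\rho\in J^e(T^r,T^s)$, and write $R:=T^r\times T^s$, which is an ergodic automorphism of $(X\times X,\rho)$. Since $|f|=|g|=1$, the functions $f\ot\raz_X$ and $\raz_X\ot g$ belong to $M(X\times X,\rho)$, and a direct computation gives $W_R(f\ot\raz_X)=(W_{T^r}f)\ot\raz_X$, whence $f\ot\raz_X\in E_k(R)$; likewise $\raz_X\ot g\in E_k(R)$. Using Lemma~\ref{lemma:Tr} I would then compute the top of each tower: writing $c_1:=W_T^kf$ and $c_2:=W_T^kg$ (both in $E_0(T)$), one gets $W_R^k(f\ot\raz_X)=c_1^{r^k}$ and $W_R^k(\raz_X\ot g)=c_2^{s^k}$.

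Set $H:=f\ot\ov g\in E_k(R)$. Because $W_R$ is a homomorphism commuting with complex conjugation, $W_R^kH=c_1^{r^k}c_2^{-s^k}=:\zeta$, a constant. The decisive step is then to apply Lemma~\ref{lemma:integrale_nulle} to the ergodic system $(X\times X,\rho,R)$: if $\zeta$ is irrational, then $H\in E_k^{\ir}(R)$ and hence $\int_{X\times X}f\ot\ov g\,d\rho=0$. Contrapositively, $\int_{X\times X}f\ot\ov g\,d\rho\neq0$ forces $\zeta$ to be a root of unity; that is, $c_1^{r^k}c_2^{-s^k}$ must be a root of unity.

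Now recall that $f\in E_k(T)\setminus E_{k-1}(T)$ gives $c_1=W_T^kf\in E_0(T)\setminus\{1\}$, so by total ergodicity $c_1$ is irrational, while $c_2\in E_0(T)$ is either $1$ or irrational. If $c_2=1$ then $\zeta=c_1^{r^k}$ is itself irrational (a power of an irrational element of $\bs^1$ is never a root of unity), so the integral always vanishes and there is nothing to prove. So I would assume $c_2$ irrational and suppose, for contradiction, that two distinct coprime pairs $(r,s)\neq(r',s')$ both admit an ergodic joining with nonvanishing integral. Then $c_1^{r^k}c_2^{-s^k}$ and $c_1^{(r')^k}c_2^{-(s')^k}$ are both roots of unity; choosing a common order $N$ and raising to the $N$-th power, I would obtain $d_1^{r^k}=d_2^{s^k}$ and $d_1^{(r')^k}=d_2^{(s')^k}$, where $d_1:=c_1^N$ and $d_2:=c_2^N$ are again irrational. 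Since $(r^k,s^k)$ and $((r')^k,(s')^k)$ are pairs of mutually prime natural numbers, Lemma~\ref{lemma:key} applied to $d_1,d_2$ forces $(r^k,s^k)=((r')^k,(s')^k)$, hence $(r,s)=(r',s')$, a contradiction.

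The main obstacle I expect is the middle step: correctly recognising $f\ot\ov g$ as a level-$k$ quasi-eigenfunction of the product system over the ergodic joining and extracting the constant $W_R^kH$ via Lemma~\ref{lemma:Tr}, so that Lemma~\ref{lemma:integrale_nulle} can be invoked. Note in particular that $R$ need not be totally ergodic, so $\zeta$ may genuinely be a nontrivial root of unity rather than exactly $1$; this is precisely why one cannot apply Lemma~\ref{lemma:key} directly to $c_1,c_2$, but must first clear the root of unity by passing to the still-irrational elements $d_1=c_1^N$ and $d_2=c_2^N$.
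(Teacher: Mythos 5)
Your proposal is correct and follows essentially the same route as the paper: view $f\otimes\overline g$ as a level-$k$ quasi-eigenfunction of the ergodic product system $(X\times X,\rho,T^r\times T^s)$, compute $W^k$ of it via Lemma~\ref{lemma:Tr}, use Lemma~\ref{lemma:integrale_nulle} to rule out an irrational top constant when the integral is nonzero, and finish with Lemma~\ref{lemma:key}. The only divergence is in the last step: the paper observes that the constant $\left(W_T^k f\right)^{r^k}\left(W_T^k \overline g\right)^{s^k}$ is itself an eigenvalue of the totally ergodic $T$ (a product of eigenvalues of $T$), hence must equal $1$ outright, whereas you only conclude it is a root of unity and then clear it by raising to a common power $N$ before invoking Lemma~\ref{lemma:key} for the still-irrational $c_1^N, c_2^N$ --- a slightly longer but equally valid detour.
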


\begin{proof}
Assume that $r$, $s$ and $\rho$ are as in the statement of the proposition.
In the ergodic dynamical system $(X\times X, \rho, T^r\times T^s)$, we have
\[
  W^k_{T^r\times T^s} \left(f\otimes \overline g\right)  = W_{T^r}^k f \otimes W_{T^s}^k \overline{g},
\]
where both $W_{T^r}^k f$ and $W_{T^s}^k \overline{g}$ are eigenvalues of $T$ by Lemma~\ref{lemma:Tr}. This product is therefore
constant, let us denote it by $c$. It is an eigenvalue of $T^r\times T^s$, and it is also the product of two eigenvalues of $T$, hence it is also an eigenvalue of $T$. 
This proves that $f\otimes \overline{g}\in E_k(T^r\times T^s)$.
Since we assume that $\int f\otimes \overline g\, d\rho \neq 0$, Lemma~\ref{lemma:integrale_nulle} yields that
$W^k_{T^r\times T^s} f\otimes \overline g$ is not an irrational eigenvalue of $T^r\times T^s$. But, by the total ergodicity of $T$,
the only eigenvalue of $T$ which is not irrational is~1. Using again Lemma~\ref{lemma:Tr}, we get
\[
  c = 1 = W^k_{T^r\times T^s}\left( f\otimes \overline g\right) = \left(W_T^k f\right)^{r^k} \cdot \left(W_T^k \overline g\right)^{s^k}.
\]
Remember that $W_T^k f$ is irrational because $f\in E_k(T)\setminus E_{k-1}(T)$, and $T$ is totally ergodic. It follows that $W_T^k g$ is also irrational, hence $g\in E_k(T)\setminus E_{k-1}(T)$. Then $W_T^k f$ and $W_T^k \overline g$ are both eigenvalues of $T$ which are different from~1, hence they are both irrational. We only need now to apply Lemma~\ref{lemma:key} to conclude the proof.
\end{proof}

\begin{Def}
  $T$ is said to have {\em quasi-discrete} spectrum if:
\beq\label{m1}
\mbox{$T$ is totally ergodic},
\eeq
and
\beq\label{m2}
L^2\xbm=\ov{\mbox{span}}\left(\bigcup_{k\geq0}E_k(T)\right).\eeq
\end{Def}

\begin{Th}
  \label{thm:AOPQS}
  Let $T$ be an automorphism of $\xbm$ with quasi-discrete spectrum.
  Then it has the AOP property.
\end{Th}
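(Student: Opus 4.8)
The plan is to reduce the AOP property to a statement about quasi-eigenfunctions, exactly paralleling the discrete-spectrum case already treated in the excerpt, and then to invoke Proposition~\ref{prop:at_most_one_rs} as the crucial combinatorial input. Since quasi-discrete spectrum means that $L^2(X,\mu) = \ov{\mbox{span}}\left(\bigcup_{k\geq0}E_k(T)\right)$, the quasi-eigenfunctions are linearly dense. After subtracting constants, the span of $\bigcup_{k\geq1}E_k(T)$ together with the non-trivial eigenfunctions is dense in the subspace of functions with zero integral. Therefore, to verify~\eqref{math}, it suffices by a standard density and bilinearity argument to check that for any fixed quasi-eigenfunctions $f\in E_k(T)$ and $g\in E_\ell(T)$ (with $k,\ell\ge1$, or more precisely with $f,g$ non-constant so that their integrals vanish), the joining integrals $\int_{X\times X} f\otimes\ov{g}\,d\rho$ vanish for all $\rho\in J^e(T^r,T^s)$ once $r,s$ are large enough distinct primes with $(r,s)=1$.

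First I would fix $f\in E_k(T)\setminus E_{k-1}(T)$ and $g\in E_\ell(T)$ and observe that it is enough to treat the case $k=\ell$: if $k\neq\ell$ one argues that the relevant product of quasi-eigenvalues cannot be trivial, or one simply reduces to comparing $f\otimes\ov g$ against the quasi-eigenfunction structure on the product system. The heart of the matter is then Proposition~\ref{prop:at_most_one_rs}, which tells us that for fixed $f\in E_k(T)\setminus E_{k-1}(T)$ and $g\in E_k(T)$ there is \emph{at most one} pair $(r,s)$ of mutually prime natural numbers admitting an ergodic joining $\rho\in J^e(T^r,T^s)$ with $\int f\otimes\ov g\,d\rho\neq0$. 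Consequently, for each such pair $(f,g)$ there is a bound $M(f,g)$ such that whenever $r,s\ge M(f,g)$ are distinct primes, the integral vanishes identically over all ergodic joinings.

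The main obstacle is uniformity: the definition of AOP requires the supremum over joinings to tend to zero as $r,s\to\infty$ \emph{simultaneously for the given fixed pair of $L^2$ functions}, but the density reduction approximates arbitrary zero-integral $f,g$ by finite linear combinations of quasi-eigenfunctions, and the threshold $M(f,g)$ from Proposition~\ref{prop:at_most_one_rs} depends on the individual quasi-eigenfunctions appearing. I would handle this by first fixing $\vep>0$, choosing finite linear combinations $f',g'$ of quasi-eigenfunctions approximating $f,g$ in $L^2$ to within $\vep$, and noting that $\left|\int f\otimes\ov g\,d\rho - \int f'\otimes\ov{g'}\,d\rho\right|$ is controlled by $\vep$ uniformly over all couplings $\rho$, since the projections of any coupling are $\mu$ and Cauchy--Schwarz gives a bound in terms of the $L^2(\mu)$-norms. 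Because $f',g'$ involve only \emph{finitely many} quasi-eigenfunctions, I can take $M$ to be the maximum of the finitely many thresholds $M(f_i,g_j)$ arising from the cross terms; for distinct primes $r,s\ge M$ every such cross term integral vanishes, so $\int f'\otimes\ov{g'}\,d\rho=0$ for all $\rho\in J^e(T^r,T^s)$.

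Finally I would assemble these estimates: for distinct primes $r,s\ge M$ and any $\rho\in J^e(T^r,T^s)$ we obtain $\left|\int f\otimes\ov g\,d\rho\right|\le 2\vep$ (up to a fixed multiplicative constant depending on the sup-norms or $L^2$-norms of $f$ and $g$). Since $\vep$ was arbitrary, this yields~\eqref{math} and establishes the AOP property. The only subtlety worth double-checking is that the cross terms involving a constant (zero-integral) factor, or products of a quasi-eigenfunction with itself in the diagonal of the bilinear expansion, are either identically zero by the zero-integral hypothesis or are again covered by Proposition~\ref{prop:at_most_one_rs}; total ergodicity of $T$ guarantees all non-trivial quasi-eigenvalues are irrational, which is precisely the hypothesis under which Proposition~\ref{prop:at_most_one_rs} and Lemma~\ref{lemma:key} apply.
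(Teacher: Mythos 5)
Your proposal is correct and follows essentially the same route as the paper: reduce by density of the quasi-eigenfunctions to finitely many cross terms and invoke Proposition~\ref{prop:at_most_one_rs}, whose ``at most one pair $(r,s)$'' conclusion gives the required uniform threshold. The paper's own proof is just this two-line reduction; you merely spell out the Cauchy--Schwarz approximation over couplings and the symmetry/level-matching details that it leaves implicit.
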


\begin{proof}
  By the definition of quasi-discrete spectrum, it is enough to check that, for any $k\ge1$ and any $f,g\in E_k(T)$, we have,
  for $r,s$ mutually prime and large enough and for each $\rho\in J^e(T^r,T^s)$,
  \[
    \int_{X\times X} f\otimes \overline{g}\, d\rho = 0.
  \]
But this immediately follows from Proposition~\ref{prop:at_most_one_rs}.
\end{proof}

\section{Application to some  algebraic models of discrete and quasi-discrete spectrum transformations} 

\subsection{Irrational rotation}
\label{sec:alpha_n}
Fix some irrational number $\alpha$, and consider the transformation 
$R_\alpha: x\mapsto x+\alpha$ on $\T$. Then $(\T,R_\alpha)$ is uniquely ergodic, and the corresponding measure-theoretic dynamical
system is totally ergodic and has discrete spectrum. It therefore has AOP, and Theorem~\ref{tw:sa3} applies in this case 
(we consider $f(x):=e^{2\pi i x}$).
Then, for any multiplicative function $\bnu$, $|\bnu|\le1$, any sequence $(x_k)$ of points in $\T$ and any increasing sequence 
$1=b_1<b_2<\cdots$ with $b_{k+1}-b_k\to\infty$, we have
\[
\lim_{K\to\infty}\frac{1}{b_{K+1}} \sum_{1\le k\le K} e^{2\pi i x_k} \sum_{b_{k}\le n <b_{k+1}} e^{2\pi i n\alpha}\bnu(n)=0.
\]
Now, for each $k\ge1$, we can choose $x_k\in\T$ such that 
\[
  e^{2\pi i x_k} \sum_{b_{k}\le n <b_{k+1}} e^{2\pi i n\alpha}\bnu(n) = 
  \left| \sum_{b_{k}\le n <b_{k+1}} e^{2\pi i n\alpha}\bnu(n) \right|.
\]
This proves Theorem~\ref{wn:hmt1} in the case of a polynomial of degree~1.

\subsection{Affine  transformations of the $d$-dimensional torus}
\label{sec:proof-poly}
To prove Theorem~\ref{wn:hmt1} in its full form, we generalize the preceding case by considering 
some transformation of the $d$-dimensional torus of the form
\[
  T:(x_1,\ldots,x_d)\longmapsto (x_1+\alpha, x_2+x_1, \ldots, x_{d}+x_{d-1}).
\]
This transformation is an affine transformation, it can be written as $x\mapsto Ax+b$ where $A=[a_{ij}]_{i,j=1}^d$ is the matrix 
defined by $a_1:=1, a_{i-1,i}=a_{ii}:=1$ and all other coefficients equal to zero, and $b:=(\alpha,0,\ldots,0)$.
Taking again $\alpha$ irrational, $(\T^d,T)$ is a uniquely ergodic dynamical system, and it is totally ergodic with respect to the 
Haar measure on $\T^d$, which is the unique invariant measure \cite{Fu}. Moreover, the corresponding measure-theoretic dynamical system
has quasi-discrete spectrum~\cite{Ab}. Hence it has AOP, and Theorem~\ref{tw:sa3} applies again. In particular, we will use this theorem
with the function $f(x_1,\ldots,x_d):=e^{2\pi i x_d}$, observing as in~\cite{Fu,Ei-Wa} that the last coordinate of $T^n(x_1,\ldots,x_d)$
is the following polynomial in $n$:
\[
    {n\choose d}\alpha+{n\choose{d-1}}x_1+\ldots+nx_{d-1}+x_d. 
\]

\begin{proof}[Proof of Theorem~\ref{wn:hmt1}]
Consider a fixed polynomial $P\in\R[x]$, whose leading coefficient is irrational. Then we can always choose $\alpha, x_1,\ldots,x_d$ 
so that 
\[
    {n\choose d}\alpha+{n\choose{d-1}}x_1+\ldots+nx_{d-1}+x_d = P(n) \quad\text{for all $n$}.
\]
Let $1=b_1<b_2<\cdots$ be a sequence of integers satisfying $b_{k+1}-b_k\to\infty$, and define a sequence $(x^k)$ of 
points in $\T^d$ by setting
\[
  x^k := (x_1,x_2,\ldots,x_{d-1}, x_d+t_k),
\]
where $(t_k)$ is a sequence of points in $\T$ to be precised later. Then,  Theorem~\ref{tw:sa3} applied to $T$ 
gives
 \[
 \lim_{K\to\infty}\frac{1}{b_{K+1}} \sum_{1\le k\le K} e^{2\pi i t_k} \sum_{b_{k}\le n <b_{k+1}} e^{2\pi i P(n)}\bnu(n)=0.
\]
Now, for each $k$ we can choose $t_k\in\T$ such that 
\[
  e^{2\pi i t_k} \sum_{b_{k}\le n <b_{k+1}} e^{2\pi i P(n)}\bnu(n)=\left| \sum_{b_{k}\le n <b_{k+1}} e^{2\pi i P(n)}\bnu(n)\right|,
\]
and this concludes the proof.  
\end{proof}

\begin{proof}[Proof of Theorem~\ref{thm:HX}]
  Assume that the result is false. Then we can find  a non constant $P\in\R[x]$ with irrational leading coefficient, 
  and two sequences $(M_\ell)$ and $(H_\ell)$ with
  $H_\ell\to\infty$, $H_\ell/M_\ell\to 0$, satisfying for 
  some $\varepsilon>0$:
  \begin{equation}
    \label{eq:thm_fails}  \forall \ell,\quad 
    \frac{1}{M_\ell} \sum_{M_\ell\le m<2M_\ell} \frac{1}{H_\ell} \left| \sum_{m\le n<m+H_\ell} e^{2\pi iP(n)}\bnu(n)  \right|>\varepsilon.
  \end{equation}
  By  passing to a subsequence if necessary, we can also assume that for each $\ell$, $M_{\ell+1}>2M_\ell+H_\ell$.
  Rewriting the left-hand side of the above inequality as
  \[
    \frac{1}{H_\ell} \sum_{0\le r< H_\ell} \frac{1}{M_\ell/H_\ell} \sum_{\stack{M_\ell\le m<2M_\ell}{m=r \mod H_\ell}} \frac{1}{H_\ell} \left| \sum_{m\le n<m+H_\ell} e^{2\pi iP(n)}\bnu(n)  \right|,
  \]
  we see that for each $\ell$ there exists $0\le r_\ell <H_\ell$ such that
  \beq
  \label{eq:grand}
    \frac{1}{M_\ell/H_\ell} \sum_{\stack{M_\ell\le m<2M_\ell}{m = r_\ell  \mod H_\ell}} \frac{1}{H_\ell} \left| \sum_{m\le n<m+H_\ell} e^{2\pi iP(n)}\bnu(n)  \right| > \varepsilon.
  \eeq
  Now, set 
  \begin{multline*}
    \{1=b_1< b_2 < \ldots \} := \\
\{1\} \cup  \bigcup_{\ell} \{m: M_\ell\le m<2M_\ell+H_\ell\text{ and }m= r_\ell  \mod H_\ell\},
  \end{multline*}
and let $K_\ell$ be the largest $k$ such that $b_k<2M_\ell$. 
Observing that $b_{K_\ell+1}/(2M_\ell)\to1$, and using inequality~\eqref{eq:grand}, we get
\begin{multline*}
 \liminf_{\ell\to\infty} \frac{1}{b_{K_\ell+1}}\sum_{k\leq K_\ell}\left|\sum_{b_k+1\leq n < b_{k+1}}e^{2\pi iP(n)}\bnu(n)\right| \\
 \ge  \liminf_{\ell\to\infty} \frac{1}{2M_\ell}\sum_{\stack{M_\ell\le m<2M_\ell}{m = r_\ell  \mod H_\ell}} \left| \sum_{m\le n<m+H_\ell} e^{2\pi iP(n)}\bnu(n)  \right| 
 \ge  \varepsilon / 2,
\end{multline*}
which contradicts Theorem~\ref{wn:hmt1}.
\end{proof}

\subsection{Algebraic models of general quasi-discrete spectrum transformations}

Assume that $T$ is a quasi-discrete spectrum automorphism of $\xbm$. Then, in view of
\cite{Ab}, \cite{Ha-Pa1}, \cite{Ha-Pa2}, up to measure theoretic isomorphism, we may assume that:
\begin{enumerate}[(i)]
  \item \label{qd1} $X$ is a compact, connected, Abelian group.
  \item \label{qd2}$Tx=Ax+b$, where $A:X\to X$ is a continuous group automorphism, $b\in X$, and the group generated by
$(A-I)X$ and $b$ is dense in $X$.
\item \label{qd3} For each character $\chi\in\widehat{X}$, there exists $m\geq 0$ such that
$\chi\circ (A-I)^m=1$.~\footnote{Note that $\chi(Tx)=\chi(x)\big(\chi((A-I)x)\chi(b)\big)$, so by an easy induction, we obtain that $\chi\in E_m(T)$ if $m$ is the smallest such that $\chi\circ(A-I)^m=1$.}
\end{enumerate}

By \eqref{qd3}, $\bigcap_{n\geq0}(A-I)^nX=\{0\}$, that is, $T$ is unipotent, and by \cite{Ho-Pa1}, \cite{Ho-Pa2}, it follows that $T$ is minimal. Then, in view of \cite{Ha-Pa1}, $(T,X)$ is uniquely ergodic. 
% 
% It has been proved by Liu and Sarnak \cite{Li-Sa}, that the above $(T,X)$ satisfies Sarnak's conjecture. Due to ??????, we can strengthen now, to the following result.

\begin{Th} Assume that $T$ satisfies \eqref{qd1},~\eqref{qd2} and~\eqref{qd3}. Let $\bnu:\N\to \C$ be any multiplicative function, with $|\bnu|\le1$. Then, for each $1\neq\chi\in \widehat{X}$ and each increasing sequence $1=b_1<b_2<\ldots$ with $b_{k+1}-b_k\to\infty$ and each choice of $(z_k)\in X$, we have
$$
\frac{1}{b_{K+1}}\sum_{k\leq K}\left|\sum_{b_k+1\leq n < b_{k+1}}\chi(T^{n}z_k)\bnu(n)\right|\longrightarrow 0\text{ when }K\to\infty.$$

\end{Th}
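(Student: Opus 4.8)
The transformation $T$ is totally ergodic (it has quasi-discrete spectrum) and, by Theorem~\ref{thm:AOPQS}, it has the AOP property; moreover, as recalled just above, $(X,T)$ is uniquely ergodic. Since AOP is a measure-theoretic invariant, the conclusion of Theorem~\ref{tw:sa2} holds in every uniquely ergodic model of $(X,\mu,T)$, so the hypotheses of Theorem~\ref{tw:sa3} are satisfied by $(X,T)$. Being the composition of a continuous automorphism of $X$ with a translation, $T$ preserves the Haar measure, which by unique ergodicity is the unique invariant measure $\mu$; in particular $\int_X\chi\,d\mu=0$ for every $1\neq\chi\in\widehat X$, and $\chi$ is continuous. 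The plan is thus to apply Theorem~\ref{tw:sa3} with $f=\chi$, exactly as in the proof of Theorem~\ref{wn:hmt1}: the signed averages vanish for every choice of base points, so it remains to absorb, for each $k$, the phase of the inner sum into the base point so as to recover its modulus.

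The key point is to realize an arbitrary phase by a translation of the base point. Writing $T^nx=A^nx+\beta_n$ with $\beta_n:=\sum_{0\le j<n}A^jb$, we have $\chi(T^n(z+c))=\chi(T^nz)\,\chi(A^nc)$, so a translation by $c$ multiplies each summand by $\chi(A^nc)$, which is a genuine ($n$-independent) constant precisely when $c$ lies in $X_0:=\bigcap_{n\geq1}\ker\bigl(\chi\circ(A^n-I)\bigr)$. The crux of the proof is therefore the claim: for every $1\neq\chi\in\widehat X$ and every $\lambda\in\bs^1$ there is $c\in X_0$ with $\chi(c)=\lambda$, i.e.\ $\chi|_{X_0}$ maps onto $\bs^1$. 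To see this, let $m\geq1$ be least with $\chi\in E_m(T)$, equivalently $\chi\circ(A-I)^m=1$ while $\chi\circ(A-I)^{m-1}\neq1$ (see the footnote to~\eqref{qd3}). Expanding $A^n-I=(A-I)\sum_{j<n}A^j$ one checks that $\bigl(\chi\circ(A^n-I)\bigr)\circ(A-I)^{m-1}=1$, so each $\chi\circ(A^n-I)$ lies in $E_{m-1}(T)$; hence the subgroup of $\widehat X$ generated by $\{\chi\circ(A^n-I):n\geq1\}$, which is exactly the annihilator of $X_0$, is contained in $E_{m-1}(T)$. On the other hand $\chi^k\circ(A-I)^{m-1}=\bigl(\chi\circ(A-I)^{m-1}\bigr)^k\neq1$ for every $k\geq1$, because $\widehat X$ is torsion-free ($X$ being connected); thus $\chi^k\notin E_{m-1}(T)$, so no power of $\chi$ annihilates $X_0$. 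Consequently $\chi(X_0)$ is an infinite, hence dense, compact subgroup of $\bs^1$, that is, all of $\bs^1$, which proves the claim. This surjectivity is the main obstacle; everything else is bookkeeping.

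With the claim in hand the argument concludes as in Theorem~\ref{wn:hmt1}. Fix an arbitrary $(z_k)\subset X$ and set $S_k:=\sum_{b_k\le n<b_{k+1}}\chi(T^nz_k)\bnu(n)$. For each $k$ choose $c_k\in X_0$ with $\chi(c_k)=\overline{S_k}/|S_k|$ (taking $c_k=0$ when $S_k=0$), and put $w_k:=z_k+c_k$; then $\sum_{b_k\le n<b_{k+1}}\chi(T^nw_k)\bnu(n)=\chi(c_k)\,S_k=|S_k|$. Applying the conclusion of Theorem~\ref{tw:sa3} to the (arbitrary) sequence $(w_k)$ and to $f=\chi$ gives $\frac1{b_{K+1}}\sum_{k\le K}|S_k|\to0$. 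Passing from the range $b_k\le n<b_{k+1}$ to the range $b_k+1\le n<b_{k+1}$ of the statement alters each $S_k$ by at most one term of modulus $\le1$, and since the gaps $b_{k+1}-b_k$ tend to infinity we have $b_{K+1}/K\to\infty$, so the resulting total error is $O(K/b_{K+1})=o(1)$. This yields the desired convergence.
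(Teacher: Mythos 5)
Your proof is correct and follows essentially the same route as the paper: absorb the phase of each block sum by translating the base point within a subgroup on which $\chi$ is surjective and on which translation by its elements multiplies $\chi(T^n\cdot)$ by an $n$-independent constant, then invoke AOP and Theorem~\ref{tw:sa3}. The only differences are minor: the paper works with $Y_m=(A-I)^{m-1}X$ and gets surjectivity of $\chi$ on it from compactness and connectedness, whereas you use the larger group $X_0=\bigcap_{n\ge1}\ker\bigl(\chi\circ(A^n-I)\bigr)$ and derive surjectivity from torsion-freeness of $\widehat{X}$; your explicit treatment of the off-by-one discrepancy between the summation ranges in Theorem~\ref{tw:sa3} and in the statement is a detail the paper glosses over.
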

\begin{proof}
We have already noticed that $(X,T)$ is uniquely ergodic. Fix a character $1\neq \chi\in\widehat{X}$. Using~\eqref{qd3}, let $m\geq 0$ be the smallest such that $\chi\circ (A-I)^m=1$. Since $\chi\neq1$, we have $m\geq1$. Let $Y_m:=(A-I)^{m-1}X$. Note that:
\begin{itemize}
\item $Y_m$ is a non-trivial subgroup of $X$ (indeed, otherwise, $\chi\circ(A-I)^{m-1}=1$);
\item $Y_m$ is connected (indeed, $(A-I)^{m-1}$ is a continuous group homomorphism and, by~\eqref{qd1}, $Y_m$ is the image of a connected space);
\item $\chi(Y_m)=\bs^1$ (by the previous observation).\end{itemize}
Note that, for each $y\in Y_m$, we have $\chi\circ A(y)=\chi(y)$, and since $AY_m\subset Y_m$, we also have $\chi\circ A^n(y)=\chi(y)$ for each $n\geq1$. Moreover, for each $y_k\in Y_m$, we have
\beq\label{qd4}
\chi(T^n(z_k+y_k))=\chi(A^ny_k)\chi(T^nz_k)=\chi(y_k)\chi(T^nz_k).\eeq
It follows by the above  that, for each $k\geq1$, we can find $y_k\in Y_m$ such that
\begin{multline*}
\sum_{b_k+1\leq n < b_{k+1}}\chi(T^n(z_k+y_k))\bnu(n)=
\chi(y_k) \sum_{b_k+1\leq n < b_{k+1}}\chi(T^{n}z_k)\bnu(n)\\
=\left|\sum_{b_k+1\leq n < b_{k+1}}\chi(T^{n}z_k)\bnu(n)\right|
\end{multline*}
and the result follows by the AOP property of the system, and Theorem~\ref{tw:sa3}.
\end{proof}

\section{Remarks on the AOP property}
\label{sec:aop}

\begin{Remark}\label{rem:m1} The class of measure-theoretic dynamical systems with AOP is stable under taking factors, since any ergodic joining of the actions of $T^r$ and $T^s$ on a factor $\sigma$-algebra extends to an ergodic joining of $T^r$ and $T^s$. It is also close by inverse limits, since it is enough to check~\eqref{math} for a dense subclass of functions $f$ in $L^2$.\end{Remark}

\begin{Prop}\label{pr:th}
If $T$ satisfies~\eqref{math} then $T$ is of zero entropy.
\end{Prop}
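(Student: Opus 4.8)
The plan is to argue by contraposition: assuming $h(T)>0$, I would produce a \emph{single} fixed pair of zero-mean functions together with, for every pair of distinct primes $r,s$, an ergodic joining of $T^r$ and $T^s$ along which these functions are maximally correlated, thereby contradicting \eqref{math}. Since \eqref{math} presupposes that the powers $T^r,T^s$ are ergodic, I may assume $T$ is ergodic. By Remark~\ref{rem:m1} the AOP property passes to factors, so it suffices to exhibit such a configuration in a factor of $T$. By Sinai's factor theorem an ergodic system of positive entropy has a nontrivial Bernoulli factor; this reduces the task to showing that a Bernoulli shift $(Z,\rho,R)$, with $Z=A^{\Z}$, $\rho=\beta^{\otimes\Z}$ and $\beta$ a nondegenerate probability on a finite alphabet $A$, fails to have AOP.

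The heart of the construction is an explicit joining of $R^r$ and $R^s$ that forces the two coordinates to agree along the matched positions $ri\leftrightarrow si$. Concretely, I would take mutually independent families $(u_i)_{i\in\Z}$, $(a_n)_{n\notin r\Z}$, $(b_n)_{n\notin s\Z}$ of i.i.d.\ $\beta$-distributed variables and define two sequences by $z^{(1)}_n:=u_{n/r}$ if $r\mid n$ and $z^{(1)}_n:=a_n$ otherwise, and $z^{(2)}_n:=u_{n/s}$ if $s\mid n$ and $z^{(2)}_n:=b_n$ otherwise. Let $\kappa$ be the joint law of $(z^{(1)},z^{(2)})$ on $Z\times Z$. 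Both marginals equal $\rho$, so $\kappa$ is a coupling; and applying $R^r\times R^s$ shifts the first sequence by $r$ and the second by $s$, which merely relabels the shared values $u_i\mapsto u_{i+1}$ and permutes the independent fill-ins among themselves, so $\kappa$ is $R^r\times R^s$-invariant, i.e.\ $\kappa\in J(R^r,R^s)$.

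To read off the correlation I would fix $f_0\in L^2(\beta)$ with $\int f_0\,d\beta=0$ and $\|f_0\|_{L^2(\beta)}=1$ (possible since $\beta$ is nondegenerate), and set $f(z):=f_0(z_0)$, a fixed zero-mean function in $L^2(\rho)$ that does not depend on $r,s$. The crucial point is that $0\in r\Z\cap s\Z$ for \emph{every} $r,s$, so that $z^{(1)}_0=u_0=z^{(2)}_0$ holds $\kappa$-a.s., whence $\int f\otimes\overline f\,d\kappa=\E\,|f_0(u_0)|^2=\|f_0\|^2=1$. The joining $\kappa$ need not be ergodic, but this is harmless: since $|\int f\otimes\overline f\,d\kappa'|\le\|f\|^2=1$ for any coupling while the value $1$ is attained by $\kappa$, the ergodic decomposition $\kappa=\int\kappa_\omega\,dP(\omega)$ expresses $1$ as a barycenter of points of the closed unit disc, forcing $\int f\otimes\overline f\,d\kappa_\omega=1$ for $P$-a.e.\ $\omega$; any such $\kappa_\omega$ lies in $J^e(R^r,R^s)$. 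Consequently $\sup_{\kappa\in J^e(R^r,R^s)}\left|\int f\otimes\overline f\,d\kappa\right|=1$ for all distinct primes $r,s$, so the limit in \eqref{math} equals $1\neq0$ for the factor $R$; by Remark~\ref{rem:m1} this contradicts the hypothesis that $T$ satisfies \eqref{math}, and therefore $h(T)=0$.

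I expect the main difficulty to be exactly what the construction is designed to sidestep: obtaining a \emph{uniform} lower bound on the correlation with one fixed pair of functions as $r,s\to\infty$, rather than merely showing $T^r$ and $T^s$ are non-disjoint. Pinning the agreement at the position $0\in r\Z\cap s\Z$, which is common to all powers simultaneously, is what yields correlation exactly $1$ for every pair $(r,s)$ at once; the remaining verifications (invariance of $\kappa$ and the passage to an ergodic component) are routine.
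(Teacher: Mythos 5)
Your proof is correct and follows essentially the same route as the paper: reduce via Remark~\ref{rem:m1} and Sinai's factor theorem to showing a Bernoulli shift fails AOP, then exhibit for each pair $(r,s)$ a joining of $R^r$ and $R^s$ that identifies the sub-process indexed by $r\Z$ in the first coordinate with the one indexed by $s\Z$ in the second --- your explicit coupling is precisely the paper's relative product $\kappa_{r,s}$ over the common factor given by the maps $\varphi_r,\varphi_s$, and the test function $f_0(z_0)$ plays the role of the paper's $x_0$. The only real difference is how ergodicity of the joining is secured: the paper invokes the relative Bernoulli property of the extensions $\varphi_r$ to assert $\kappa_{r,s}$ is itself ergodic, whereas you bypass this by noting that the correlation attains its maximal value $\|f\|^2$ and hence persists on almost every ergodic component --- a slightly more elementary step, which also has the incidental merit of using a properly centered test function.
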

\begin{proof} Because of Remark~\ref{rem:m1}, it is enough to show that there exists a Bernoulli automorphism with arbitrarily small entropy for which~\eqref{math} fails.

Fix $\delta>0$ and let $T$ be the shift on $X=\{0,1\}^{\Z}$ considered with Bernoulli measure $\mu=(\delta,1-\delta)^{\ot\Z}$. Given $r\geq1$, consider the map $\va_r:X\to X$,
$$
\va_r((x_i)_{i\in\Z}):=(x_{ri})_{i\in\Z}$$
for each $(x_i)_{i\in\Z}\in X$. Then, $\va_r\circ T^r=T\circ\va_r$ and $\va_r$ yields a homomorphism (factor map) from $(X,\mu,T^r)$ onto $(X,\mu,T)$ (because the laws of the independent processes $(X_i)_{i\in \Z}$, $X_i((x_j)_{j\in\Z})=x_i$ and $(X_{ri})_{i\in\Z}$ are the same). Moreover, the extension \beq\label{rb}\mbox{$\va_r:(X,\mu,T^r)\to (X,\mu,T)$ is relatively Bernoulli}\eeq
(see \textit{e.g.} \cite{Gl}) since $\va_r^{-1}(\cb)$ has the complementary independent sub-$\sigma$-algebra given by the smallest $T^r$ invariant sub-$\sigma$-algebra making the variables $X_1,\ldots,X_{r-1}$-measurable.

Take now $r\neq s\geq1$ to obtain the joining $\kappa_{r,s}$ defined as
\beq\label{joi}
\int g\ot h\,d\kappa_{r,s}:=\int \E(g|\va_r^{-1}(\cb))(\va_r^{-1}(x))\E(h|\va_s^{-1}(\cb))(\va_s^{-1}(x)) \,d\mu(x).\eeq
In view of~\eqref{rb}, $\kappa_{r,s}$ is ergodic. Now, consider $f\in L^2\xbm$ defined by $f((x_i)_{i\in Z}):=x_0$. We claim that
\beq\label{eq:16}
\E(f|\va_r^{-1}(\cb))(\va_r^{-1}(x))=f(x)=x_0.
\eeq
Indeed, given $x=(x_i)_{i\in\Z}\in X$,
$$\va_{r}^{-1}(x)=\{y\in X:\: y_{ir}=x_i\;\mbox{for each}\;i\in\Z\}$$
and on that fiber we have a relevant Bernoulli measure as the conditional measure. Note however that $f$ is constant on the fiber, in other words, $f$ is measurable with respect to $\va_r^{-1}(\cb)$. Since~\eqref{eq:16} holds also for $\varphi_s$, using additionally~\eqref{joi}, it follows that
$$
\int f\ot \overline{f}\,d\kappa_{r,s}=\|f\|^2_{L^2\xbm}>0,$$
which excludes the possibility of satisfying~\eqref{math}.
\end{proof}

\begin{Remark} There exist some zero-entropy automorphisms for which the AOP property fails. 
Indeed, we will show that it does not hold for horocycle flows.
We have the classical relation between the geodesic and the horocycle flow: for each $s,u\in\R$, we have
$$
g_uh_sg^{-1}_u=h_{e^{-2u}s}.$$
Set $r=e^{-2u}s$. Assume now that $r\neq s$ are coprime natural numbers, $r,s\to \infty$ but we also assume that $r/s\to 1$.
For such pair $(r,s)$, the number $u$ is determined by $e^{-2u}=r/s$, so $u$ is close to zero. We can hence assume that $g_u$, whose graph $\Delta_{g_u}$ is an ergodic joining of $h_r=(h_1)^r$ and $h_s=(h_1)^s$, belongs to the compact set of joinings $\{\Delta_{g_t}:t\in[0,1]\}$. Since a graph joining is clearly never equal to the product measure, there is a positive distance of $\{\Delta_{g_t}:\:t\in[0,1]\}$ from the product measure in the relevant space of couplings. Therefore,~\eqref{math} fails to hold.
\end{Remark}

%\scriptsize

\noindent
Mariusz Lema\'{n}czyk\\
\textsc{Faculty of Mathematics and Computer Science, Nicolaus Copernicus University, Chopin street 12/18, 87-100 Toru\'{n}, Poland}\par\nopagebreak
\noindent
\textit{E-mail address:} \texttt{mlem@mat.umk.pl}

\medskip

\noindent
Houcein El Abdalaoui, Thierry de la Rue\\
\textsc{Laboratoire de Math\'{e}matiques Rapha\"{e}l Salem, Normandie Universit\'{e}, Universit\'{e} de Rouen, CNRS -- Avenue de l'Universit\'{e} -- 76801 Saint Etienne du Rouvray, France}\par\nopagebreak
\noindent
\textit{E-mail address:} \texttt{elhoucein.elabdalaoui@univ-rouen.fr,\\ Thierry.de-la-Rue@univ-rouen.fr}

\end{document}